\numberwithin{equation}{section}
\newcommand{\dm}[1]{{\displaystyle{#1}}}
\newtheorem{theorem}{\bf Theorem}[section]
\newtheorem{definition}{Definition}[section]
\newtheorem{corollary}{Corollary}[section]
\newtheorem{proposition}{Proposition}[section]
\newtheorem{lemma}{Lemma}[section]
\newtheorem{remark}{Remark}[section]
\theoremstyle{remark}
\newtheorem{example}{\bf Example}
\newcommand{\vone}{\vskip 2ex}
\def \vec{\mathrm v\mathrm e \mathrm c}
\def \f{\widetilde{f}}
\def \R{{\mathbb R}}
\def \K{{\mathscr{K}}}
\def \C{{\mathscr{C}}}
\def \Ms{{\mathscr{M}}}
\def \T{\mathsf T}
\def\bmatrix#1{\left[\begin{matrix}
		#1
	\end{matrix}\right]}
\def \M{{\mathcal M}}
\def \E{{\mathcal E}}
\def \SY{{\mathcal S }}
\def \vp{{\varphi}}
\def \D{{\Delta}}
\def \d{{\bf d}}
\def \f{{\bf f}}
\def \g{{\bf g}}
\def \vr{{\Bigl\vert }}
\def \Vr{{\Bigl\Vert }}
\def \L{\mathcal{L}}
\def \Up{{\Upsilon}}
\def\bmatrix#1{\left[ \begin{matrix} #1 \end{matrix} \right]}
\def \noin{\noindent}
\def \R{{\mathbb R}}
\title{
Structured condition numbers for a linear function of the solution of the generalized saddle point problem}
\author{Sk. Safique Ahmad\footnotemark[2] \footnotemark[1] \and Pinki Khatun \footnotemark[2] }
\date{}
\begin{document}

	\maketitle	
		
		\begin{abstract}
  This paper addresses structured normwise,  mixed, and componentwise condition numbers (\textit{CNs}) for a linear function of the solution to the generalized saddle point problem (\textit{GSPP}).
  We present a general framework that enables us to measure the structured \textit{CNs} of the individual components of the solution. Then, we derive their explicit formulae when the input matrices have symmetric, Toeplitz, or some general linear structures.  In addition, compact formulae for the unstructured \textit{CNs} are obtained, which recover previous results on \textit{CNs} for \textit{GSPPs} for specific choices of the linear function. Furthermore,  applications of the derived structured \textit{CNs} are provided to determine the structured \textit{CNs} for the weighted Toeplitz regularized least-squares problems and Tikhonov regularization problems, which retrieves some previous studies in the literature.
		\end{abstract}
		\noindent {\bf Keywords.} 
		Generalized	saddle point problems,	Condition number, Perturbation analysis,  Weighted Teoplitz regularized least-squares problem, Teoplitz matrices
  
			\noindent {\bf AMS subject classification.}  65F35, 65F20, 65F99, 15A12

	\footnotetext[2]{
		Department of Mathematics, Indian Institute of Technology Indore, Khandwa Road, Indore, 453552, Madhya Pradesh, India. Email: \texttt{safique@iiti.ac.in} (Sk. Safique Ahmad), \texttt{phd2001141004@iiti.ac.in},  \texttt{pinki996.pk@gmail.com} (Pinki Khatun)}
	\footnotetext[1]{Corresponding author.
 
	Pinki Khatun acknowledges support from Council of Scientific $\&$ Industrial Research (CSIR), New Delhi, in the form of a fellowship (File no. $09/1022(0098)/2020$-EMR-I).}
	
	
\section{Introduction}
Generalized saddle point problems (\textit{GSPPs}) have received significant attention owing to their extensive applications across numerous fields in scientific computing such as computational fluid dynamics \cite{navier2015, Elman2005}, constrained optimization \cite{OPTM2014, OPTM1992}, and so on. Consider the following two-by-two block linear system: 
	\begin{align}\label{eq11}
		&\mathcal{M}z\equiv	\bmatrix{A & B^{\T}\\ C & D}\bmatrix{x \\ y}=\bmatrix{f \\ g}\equiv \bf{ d},
	\end{align}
	where $A\in \R^{m\times m},$ $B,C\in\R^{n\times m},$ $D\in \R^{n\times n},$ $x,f\in \R^m,$ $y,g\in \R^n,$ and  $B^{\T}$ represents the transpose of any matrix $B$. Then (\ref{eq11}) is referred to as a \textit{GSPP} if the block matrices $A, B, C$ and $D$ satisfy some special structures, such as $B=C,$  symmetric, Toeplitz, or have some other linear structures \cite{Benzi2005}. Recently, a large amount of efficient iteration methods have been proposed to solve the linear system
 \eqref{eq11}, such as inexact Uzawa schemes \cite{ZZBaiUzawa}, Krylov subspace methods \cite{Krylov2002}, and so on. For a comprehensive survey of applications, algebraic properties, and iteration methods for \textit{GSPPs}, we refer to \cite{ZZBai2021} and reference therein.

 The \textit{GSPP} or its special cases originate from a wide range of applications. For example: (i) The Karush-Kuhn-Tucker (KKT) system ($A=A^{\T},$ $B=C$, and $D={\bf 0},$ here ${\bf 0}$ denotes the zero matrix of appropriate dimension) is one of the simplest versions of $(\ref{eq11})$ and arises from the KKT first-order optimality condition in constrained optimization problems \cite{ Benzi2005, Sun1999}. (ii) The sinc-Galerkin discretization of ordinary differential equations (ODEs) leads to a problem of the form \eqref{eq11} \cite{ZZBai2013, ZZBai2011, ZZBai2014}. (iii) The system \eqref{eq11} also comes from the finite element discretization of time-harmonic eddy current models \cite{ZZBai2012BASI}. (iv) The finite difference discretization of time-dependent Stokes equations generates systems in the form of \eqref{eq11} \cite{ZZBai2012SOR}. (v) \textit{GSPPs} emerge in the weighted Toeplitz regularized least-squares (\textit{WTRLS}) problem \cite{Benziimage2006} arising from image restoration and reconstruction problems \cite{imagereconstruction, IMAGERESTORATION} of the form:
 \begin{align}\label{eq:WTRLS}
     \min_{y\in \R^n}\|\mathbf{M}y-\tilde{\bf d}\|_2^2,
 \end{align}
	where $\mathbf{M}=\bmatrix{W^{1/2}Q\\ \sqrt{\lambda} I_n}\in\R^{(m+n)\times n}$, ${\bf \widetilde{d}}=\bmatrix{W^{1/2}f\\ {\bf 0}}\in\R^{m\times n},$ $Q\in \R^{m\times n} (m\geq n)$ is a full rank Toeplitz matrix and $W\in \R^{m\times m}$ is a symmetric positive definite weighted matrix. The equivalent augmented system possesses the \textit{GSPP} of the form \eqref{eq11} with  $A$ is symmetric and  $B=C$ is a Toeplitz matrix (see Section \ref{WTRLS_problem}).

	Perturbation theory is extensively used in numerical analysis to examine the sensitivity of numerical techniques and the error analysis of a computed solution \cite{Higham2002}. Condition numbers (\textit{CNs}) and backward errors are the two most important tools in perturbation theory. For a given problem, the \textit{CN} is the measurement of the worst-case sensitivity of a numerical solution to a tiny perturbation in the input data, whereas backward error reveals the stability of any numerical approach. Combined with the backward error, \textit{CNs} can provide a first-order estimate of the forward error of an approximated solution. 
	
	Rice in \cite{Rice1966} presented the classical theory of \textit{CNs}. It essentially deals with the normwise condition number (\textit{NCN}) by employing norms to measure both the input perturbation and error in the output data. A notable drawback associated with \textit{NCN} lies in its inability to capture the inherent structure of badly scaled or sparse input data. Consequently,  the \textit{NCN} occasionally overestimates the true conditioning of the numerical solution.   As a remedy for this, the mixed condition number (\textit{MCN}) and componentwise condition number (\textit{CCN}) have seen a growing interest in the literature \cite{Gohberg1993, rohn1989new, skeel1979scaling}. The former measures perturbation in input data componentwise and output data error by norms, while the latter measures both input perturbation and output data error componentwise.
 
	 {Perturbation theory and \textit{CNs} for the \textit{GSPP} \eqref{eq11} have been widely studied in the literature. A brief review of the literature work of the \textit{CNs} for the \textit{GSPP} \eqref{eq11}  is as follows.
  \citet{conditionwang} have analyzed the  normwise \textit{CN} for the solution $z=[x^{\T},y^{\T}]^{\T}$ for the KKT system, i.e., the \textit{GSPP} \eqref{eq11} with $A=A^{\T},$ $B=C$ and $D={\bf 0}$.
     In \cite{Xiangcnd2007}, authors have  discussed perturbation bounds  for the \textit{GSPP} when $B=C,$ 
     and $D={\bf 0},$ and  have derived closed formulae for the \textit{NCN}, \textit{MCN}, and \textit{CCN} of the solutions $z=[x^{\T},y^{\T}]^{\T}$ and the individual solution components $x$ and $y.$
      The \textit{NCN} and perturbation bounds have been investigated in \cite{weiwei2009}  for the solution  $z = [x^{\T}, y^{\T}]^{\T}$  of the \textit{GSPP} \eqref{eq11}, with the conditions $ B = C $ and  $ D\neq \mathbf{0}.$
    Later, \citet{meng2019condition} studied the \textit{MCN} and \textit{CCN} for $z = [x^{\T}, y^{\T}]^{\T}.$ Additionally, they explored the  \textit{NCN}, \textit{MCN}, and \textit{CCN} for the individual solution components $x$ and $y.$ 
   Recently, new perturbation bounds have been derived for the \textit{GSPP} \eqref{eq11} under the condition \( B \neq C \), without imposing any special structure on the $ A$ and $D$ \cite{Wei-Wei2017}.}
   
In many applications,  blocks of the coefficient matrix $\M$ of the system (\ref{eq11}) exhibit linear structures (for example, symmetric, Toeplitz or symmetric-Toeplitz)  {\cite{SymbolCirculant, Elman2005, SS2015, SYMTEOP}}. 
 Therefore, it is reasonable to ask: how sensitive is the solution when structure-preserving perturbations are introduced to the coefficient matrix of  \textit{GSPPs}? To address the aforementioned query, we explore the notion of structured \textit{CNs} by restricting perturbations that preserve the structures inherent in the block matrices of $\M.$ 

Furthermore, in many instances, $x$ and $y$ represent two distinct physical entities; for example, in the Stokes equation{ ,} $x$ denotes the velocity vector, and $y$ signifies the scalar pressure field  { \cite{Elman2005}}. Therefore, it is important to assess their individual conditioning properties. 
To accomplish this, we propose a general framework for assessing the conditioning of $x,$ $y,$ $z=[x^{\T},\, y^{\T}]^{\T}$ and each solution component. In the proposed general framework, we consider the structured \textit{CNs} of a linear function  $\mathbf{L}[x^{\T},\, y^{\T}]^{\T}$ of the solution to \textit{GSPP} \eqref{eq11}, where $\mathbf{L}\in \R^{k\times (m+n)}.$
 The matrix $\mathbf{L}$ serves as a pivotal tool for the purpose of selecting solution components. For example, $(i)$ $\mathbf{L}=I_{m+n}$ gives the \textit{CNs} for $[x^{\T},\, y^{\T}]^{\T},$ $(ii)$ $\mathbf{L}=\bmatrix{I_m & {\bf 0}}$ gives the \textit{CNs} for $x,$  {and} $(iii)$ $\mathbf{L}=\bmatrix{ {\bf 0} & I_n}$ gives the \textit{CNs} for $y$.  {Here, $I_m$ stands for the identity matrix of order $m$.}

\noin The key contributions of this paper are summarized as follows:
	\begin{itemize}
	\item We study the \textit{NCN}, \textit{MCN}, and \textit{CCN} for the linear function $\mathbf{L}[x^{\T},\, y^{\T}]^{\T},$ which in turn provides a general framework, enabling us to derive \textit{CNs} for the solutions $[x^{\T},\, y^{\T}]^{\T},$ $x$, $y,$ and each solution component.
 \item We investigate unstructured \textit{CNs} for $\mathbf{L}[x^{\T},\, y^{\T}]^{\T}$ by considering $B=C$ and then, the structured \textit{CNs} when the (1,1)--block $A$ is symmetric and (1,2)--block $B$ is Toeplitz. We derive explicit formulae for both unstructured and structured \textit{CNs}.  For appropriate choices of $\mathbf{L},$ we have shown that our derived unstructured \textit{CNs} formulae generalize the results given in literature \cite{ meng2019condition, weiwei2009}. 
		\item By considering linear structure on the block matrices $A$ and $D$ with $B\neq C,$ we provide compact formulae of the structured \textit{NCN}, \textit{MCN}, and \textit{CCN} for the linear function $\mathbf{L}[x^{\T},\, y^{\T}]^{\T}$ of the \textit{GSPP}  (\ref{eq11}). 
		\item Utilizing the structured \textit{CNs} formulae, we derive the structured \textit{CNs} for the \textit{WTRLS} problem and generalize some of the previous structured \textit{CNs}  {formulae} for the Tikhonov regularization problem.  This shows the generic nature of our obtained results. 
  
		\item Numerical experiments demonstrate that the obtained structured \textit{CNs} offer sharper bounds to the actual relative errors than their unstructured counterparts.
	\end{itemize}
	
	The organization of this paper is as follows. Section \ref{sec2}  {discusses } notation and preliminary results about \textit{CNs}. In Section \ref{sec3} and \ref{sec4}, we investigate unstructured and structured  \textit{NCN}, \textit{MCN}, and \textit{CCN} for the linear function $\mathbf{L}[x^{\T},\, y^{\T}]^{\T}$  of the solution of the \textit{GSPP}. Furthermore, an application of our obtained structured \textit{CNs} is provided in Section \ref{WTRLS_problem}  {for} \textit{WTRLS}  problems.  { Additionally, these \textit{CNs} are used to retrieve some prior found results for Tikhonov regularization problems}. In Section \ref{sec7},  numerical  {experiments} are carried out to demonstrate the effectiveness of the proposed structured \textit{CNs}. Section \ref{sec8}  {presents} some concluding remarks.

	\section{Notation and preliminaries}\label{sec2}
	In this section, we define some notation and review some well-known results, which play a crucial role in constructing the main findings of this paper. 
\subsection{Notation}
 {Let $\R^{m\times n}$  be the set of all $m\times n$ real matrices, and} { $\|\cdot \|_2$, $\|\cdot \|_{\infty},$ and $\|\cdot \|_F$ stand for the Euclidean norm$/$matrix $2$-norm, infinity norm, and Frobenius norm, respectively.}  For $x=[x_1, x_2,\ldots, x_n]^{\T}\in \R^{n},$ we denote $\mathbf{D}_x\in \R^{n\times n}$ as the diagonal matrix with $\mathbf{D}_x(i,i)=x_i.$  The symbol $A^{\dagger}$ denotes the Moore-Penrose inverse of $A.$   Following \cite{DIAO2007, Hanyu2014}, the entrywise division of any two vectors $x,y\in \R^n$  is defined as  $\frac{x}{y}:=[\frac{x_i}{y_i}],$ where  $x_i/0=0$  whenever $x_i=0$ and infinity otherwise.  For any matrix $A=[a_{ij}]\in \R^{m\times n},$ we set $|A|{ :}=[|a_{ij}|],$ where $|a_{ij}|$ denotes the absolute value of $a_{ij}.$   For any two matrices $A,B\in\R^{m\times n},$  the notation $A~
\leq B$ represents $a_{ij}~\leq b_{ij}$ for all $1\leq i\leq m$ and $1\leq j\leq n$.  For the matrix $A=[a_1, a_2,\ldots, a_n]\in \R^{m\times n},$ where $a_i\in \R^{m},$ $i=1,2,\ldots, n,$ the linear operator $\vec:\R^{m\times n}\mapsto\R^{mn}$ is defined by $\vec(A):=[a_1^{\T}, a_2^{\T},\ldots, a_n^{\T}]^{\T}$.
 The $\vec$ operator satisfies  $\|\vec(A)\|_{\infty}=\|A\|_F.$ The Kronecker product   {\cite{kronecker1981}} of two matrices $X\in \R^{m\times n}$ and $Y\in \R^{p\times q}$ is defined by $X\otimes Y{ :}=[x_{ij}Y]\in \R^{mp\times nq}$ and some of its important properties  are listed below \cite{kronecker1981,kronecker2004}:
	\begin{align}\label{eq13}
		&\vec(XCY)=(Y^{\T}\otimes X)\vec(C), \quad
		  |X\otimes Y|=|X|\otimes |Y|,
	\end{align}
	where $C\in \R^{n\times p}.$
 \subsection{Preliminaries}
	 {Throughout this paper, we assume that $A$ and $\M$ are nonsingular. We know that if  $A$ is nonsingular, then $\M$ is nonsingular if and only if its Schur complement $S=D-CA^{-1}B^{\T}$ is nonsingular \cite{Bai2013}
 and its  inverse is expressed as follows:
	\begin{align}\label{eq12}
		\M^{-1}=\bmatrix{A^{-1} +A^{-1}B^{\T}S^{-1}CA^{-1}&-A^{-1}B^{\T}S^{-1}\\ -S^{-1}CA^{-1} & S^{-1}}.
	\end{align}}
 
  {According to \cite{DIAO2007, Hanyu2014},} we define the following notations.
 The componentwise distance between two vectors $a$ and $b$  {in $\R^{p}$} is defined as 
	\begin{equation}
		d(a,b)=\Vr \frac{a-b}{b}\Vr_{\infty}=\max_{i=1,2,\ldots,p}\left\{\frac{|a_i-b_i|}{|b_i|}\right\}.
	\end{equation}
	Let $u\in \R^p$ and $\eta> 0,$ consider sets: $B_1(u,\eta)=\{x\in \R^p : \|x-u\|_2\leq \eta \|u\|_2\}$ and $B_2(u,\eta)=\{x\in \R^p : |x_i-u_i|\leq \eta|u_i|,\, i=1,\ldots,p\}.$ 
	
	With the above conventions, next, we present the definitions of \textit{NCN}, \textit{MCN}, and \textit{CCN} for a mapping $\bm{\varphi}:\R^p\mapsto \R^q$ as follows.

 \vone
	\begin{definition}\label{def21} {\cite{DIAO2007, Gohberg1993}}
		Let $\bm{\varphi}:\R^p\mapsto \R^q$ be a continuous mapping defined on an open set $\Omega_{\bm{\varphi}}\subseteq \R^p,$ and ${\bf 0}\neq u\in \Omega_{\bm{\varphi}}$ such that $\bm{\vp}(u)\neq {\bf 0}.$ 
		\begin{description}
			\item[$(i)$] The \textit{NCN} of $\bm{\vp}$ at $u$ is defined by 
			\begin{align*}
				\K(\bm{\vp}, u)=\lim_{\eta\rightarrow 0}\sup_{\underset{x\in B_1(u,\eta)}{x\neq u }}\frac{\|\bm{\vp}(x)-\bm{\vp}(u)\|_2/\|\bm{\vp}(u)\|_2}{\|x-u\|_2/\|u\|_2}.
			\end{align*}
			\item[$(ii)$] The \textit{MCN} of $\bm{\vp}$ at $u$ is defined by 
			\begin{align*}
				\Ms(\bm{\vp}, u)=\lim_{\eta\rightarrow 0}\sup_{\underset{x\in B_2(u,\eta)}{x\neq u }}\frac{\|\bm{\vp}(x)-\bm{\vp}(u)\|_{\infty}}{\|\bm{\vp}(u)\|_{\infty}} \frac{1}{d(x,u)}.
			\end{align*}
			\item[$(iii)$]  {Let} $\bm{\vp}(u)=[\bm{\vp}(u)_1,\ldots, \bm{\vp}(u)_q]^{\T}$  {be} such that $\bm{\vp}(u)_i \neq 0$ for $i=1,2, \ldots, q.$ Then, the \textit{CCN} of $\bm{\vp}$ at $u$ is defined by  \begin{align*}
				\C(\bm{\vp}, u)=\lim_{\eta\rightarrow 0}\sup_{\underset{x\in B_2(u,\eta)}{x\neq u }} \frac{d(\bm{\vp}(x),\bm{\vp}(u))}{d(x,u)}.
			\end{align*}
		\end{description}
	\end{definition}
 \begin{definition}\cite{frechet}
     Let $\bm{\varphi}:\R^p\mapsto \R^q$ be a mapping defined on an open set 
  $\Omega_{\bm{\varphi}}\subseteq \R^p.$ Then $\bm{\varphi}$ is said to be $Fr\acute{e}chet$ differentiable at $u\in \Omega_{\bm{\varphi}}$ if there exists a bounded linear operator $\d \bm{\vp}:\R^p\mapsto \R^q $ such that
  \begin{align*}
      \lim_{h\rightarrow {\bf 0}}\frac{\|\bm{\varphi}(u+h)-\bm{\varphi}(u)-\d\bm{\vp} h\|}{\|h\|}=0,
  \end{align*}
   {where $\|\cdot\|$ denotes any norm on $\R^{p}$ and $\R^q.$ }
 \end{definition}
 
 When $\bm{\varphi}$ is $Fr\acute{e}chet$ differentiable at $u,$ we denote the $Fr\acute{e}chet$ derivative at $u$ as $\d \bm{\vp}(u).$  The next lemma gives closed form expressions for the above three \textit{CNs} when the continuous mapping $\bm{\vp}$ is $Fr\acute{e}chet$ differentiable.
	\vspace{2mm}
 \begin{lemma}\label{lm21} {\cite{DIAO2007, Gohberg1993}}
		Under the same hypothesis as of Definition \ref{def21}, when $\bm{\vp}$ is $Fr\acute{e}chet$ differentiable at $u$, we have \begin{align*}
			\K(\bm{\vp}; u)=\frac{\|\d \bm{\vp}(u)\|_2 \|u\|_2}{\|\bm{\vp}(u)\|_2}, \quad \Ms(\bm{\vp}; u)=\frac{\||\d \bm{\vp}(u)||u|\|_{\infty}}{\|\bm{\vp}(u)\|_{\infty}}, \quad \C(\bm{\vp}; u)=\left\| \frac{|\d \bm{\vp}(u)||u|}{|\bm{\vp}(u)|}\right\|_{\infty},
		\end{align*}
		where $\d \bm{\vp}(u)$ denotes the $Fr\acute{e}chet$ derivative of $\bm{\vp}$ at $u.$
	\end{lemma}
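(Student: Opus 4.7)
The plan is to exploit the Fréchet differentiability of $\vp$ at $u$, writing $\vp(x)-\vp(u)=\d\vp(u)(x-u)+r(x-u)$ with $\|r(h)\|_2=o(\|h\|_2)$ as $h\to 0$, and to analyze each of the three suprema by parameterizing the admissible perturbations so that, after letting $\eta\to 0$, the remainder contribution vanishes and the supremum collapses to a standard induced operator norm.

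For the normwise case $\K(\vp;u)$, I would set $v=x-u$, substitute the Fréchet expansion into the defining ratio, and pull the factor $\|u\|_2/\|\vp(u)\|_2$ outside. Uniform smallness of $\|r(v)\|_2/\|v\|_2$ over the shrinking ball $\{v:\|v\|_2\le\eta\|u\|_2\}\setminus\{0\}$ allows me to pass to the limit $\eta\to 0$, reducing the inner supremum to $\sup_{w\ne 0}\|\d\vp(u)w\|_2/\|w\|_2=\|\d\vp(u)\|_2$, which yields the first formula.

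For the mixed case $\Ms(\vp;u)$, I would parameterize $x\in B_2(u,\eta)$ with $x\neq u$ by $x-u=\eta\,\diag(u)\,s$ where $s\in\R^p$, $\|s\|_\infty\le 1$, and $s_i=0$ whenever $u_i=0$. Then $d(x,u)=\eta\|s\|_\infty$, while the Fréchet expansion gives $\vp(x)-\vp(u)=\eta\,\d\vp(u)\diag(u)\,s+o(\eta)$. Dividing and sending $\eta\to 0$ reduces the ratio to $\sup_{s\ne 0}\|\d\vp(u)\diag(u)\,s\|_\infty/(\|\vp(u)\|_\infty\,\|s\|_\infty)$; the inner supremum is the induced $\infty$-norm $\|\d\vp(u)\diag(u)\|_\infty$, which, using $\|M\|_\infty=\||M|\,\mathbf{1}\|_\infty$ together with the equality $|\d\vp(u)\diag(u)|=|\d\vp(u)|\,|\diag(u)|$ (valid because $\diag(u)$ is diagonal), equals $\||\d\vp(u)||u|\|_\infty$. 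The componentwise case $\C(\vp;u)$ proceeds identically, except that the output measure prepends a left multiplication by $\diag(|\vp(u)|)^{\ddagger}$, so the resulting induced $\infty$-norm simplifies to $\|\,|\d\vp(u)||u|/|\vp(u)|\,\|_\infty$.

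The main obstacle is the careful bookkeeping around the $\ddagger$ convention at entries where $u_i$ or $\vp(u)_i$ vanishes, together with a uniform $o$-estimate on $r$ so that the supremum genuinely commutes with the limit $\eta\to 0$. Once these two points are handled, each condition number collapses to an induced operator norm, and the three closed-form expressions follow by elementary manipulations with absolute-value matrices and diagonal factors.
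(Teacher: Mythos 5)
The paper does not actually prove this lemma---it is imported verbatim from the cited reference---so there is no internal proof to compare against; your linearization argument is precisely the standard derivation of these formulas and is correct. In particular, your reduction of each supremum to an induced operator norm (homogeneity lets the supremum over the shrinking ball equal the full induced norm, the uniform $o$-estimate kills the Fr\'echet remainder, and the identities $\|M\|_\infty=\||M|\mathbf{1}\|_\infty$ and $|M\,\diag(u)|=|M|\,|\diag(u)|$ convert $\|\d\vp(u)\diag(u)\|_\infty$ into $\||\d\vp(u)||u|\|_\infty$) is exactly how the result is established in the literature, with the $\ddagger$ convention handled as you describe.
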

\vspace{1mm}

First, consider the case when $B=C,$ i.e., the following \textit{GSPP} 
\begin{align}\label{Eq22}
		\M\bmatrix{x \\ y}:=	\bmatrix{ A  &B^{\T}\\ B & D}\bmatrix{x \\ y}=\bmatrix{f \\ g}:={\bf d},
	\end{align} 
and let $\D A,$ $\D B, \D D, \D f$ and $\D g$ be the perturbations in $A, B, D, f$ and $C,$ respectively. Then,  {we have the following  perturbed problem of \eqref{Eq22}:  }
\begin{align}\label{EQ22}
	(\M+\D \M)\bmatrix{x+\D x \\ y+\D y}=	\bmatrix{A+\D A & (B+\D B)^{\T}\\ B+\D B & D+\D D}\bmatrix{x+\D x\\ y+\D y}=\bmatrix{f+\D f\\ g+\D g},
	\end{align}
 {which} has the unique solution $\bmatrix{x+\D x\\ y+\D y}$  when $\|\M^{-1}\|_2\|\D \M\|_2<  1.$  Now, from \eqref{EQ22} omitting the higher order term, we obtain
 \begin{align}\label{EQ23}
		\bmatrix{\D x\\ \D y}\approx \M^{-1} \bmatrix{\D f\\ \D g}-	
		\M^{-1} \bmatrix{\D A & \D B^{\T}\\ \D B & \D D}\bmatrix{x \\ y}.
	\end{align}
 
 Using the properties in \eqref{eq13}, we have the following important lemma.
 \vspace{1.5mm}
 \begin{lemma}\label{LM22}
 Let $\bmatrix{ x\\ y}$ and $\bmatrix{x+ \D x\\ y+ \D y}$ be the unique solutions of the \textit{GSPP} \eqref{Eq22}
and \eqref{EQ22}, respectively. Then we have the following perturbation expression:
\begin{eqnarray*}
		\bmatrix{\D x\\ \D y}&\approx -\M^{-1}\bmatrix{\mathcal{R} & -I_{m+n}}\bmatrix{\vec(\D A) \\ \vec(\D B)\\ \vec(\D D)\\
			\D f \\ \D g
		},
	\end{eqnarray*}
	where 
 \begin{eqnarray}\label{EQ24}
\mathcal{R}=\bmatrix{x^{\T}\otimes I_{m} & I_m\otimes y^{\T}  & {\bf 0}\\
			{\bf 0}& x^{\T}\otimes I_{n} & y^{\T}\otimes I_{n}}.
	\end{eqnarray}
  \end{lemma}
  \begin{proof}
  { The proof} follows from \eqref{EQ23} and using the properties in \eqref{eq13}.  
  \end{proof}

Denote $\mathbf{H}=\bmatrix{A& {\bf 0}\\B &D} $ and $\D \mathbf{H}=\bmatrix{\D A& {\bf 0}\\ \D B &\D D}. $   \citet{weiwei2009} investigated unstructured  \textit{NCN} and \citet{meng2019condition}  studied unstructured \textit{MCN} and \textit{NCN} for the solution $[x^{\T}, \, y^{\T}]^{\T}$ to the  \textit{GSPP} \eqref{eq11} when $B=C,$ which are given as follows:
\begin{eqnarray}\label{EQ25}
 \K^u([x^{\T},\,  y^{\T}]^{\T}) &:=&\lim_{\eta\rightarrow 0}\sup\left\{\frac{\|[\Delta x^{\T},\, \Delta y^{\T}]^{\T} \|_2}{\eta\|[x^{\T},\, y^{\T}]^{\T}\|_2}: \left\|\bmatrix{\Delta \mathbf{H} &\D \textbf{d}}\right\|_F\leq \eta \left\|\bmatrix{\mathbf{ H}& \mathbf{d}}\right\|_F \right\}\\ \nonumber
&=&\frac{\left\|\M^{-1}\bmatrix{\mathcal{R} & -I_{m+n}}\right\|_2\left\|\bmatrix{\mathbf{ H}& \mathbf{d}}\right\|_F}{\left\|[x^{\T},\,  y^{\T}]^{\T}\right\|_2},\\
\label{EQ26}
    \Ms^u([x^{\T},\,  y^{\T}]^{\T})&:=&\lim_{\eta\rightarrow 0}\sup\left\{\frac{\|[\Delta x^{\T},\, \Delta y^{\T}]^{\T} \|_{\infty}}{\eta\|[x^{\T},\, y^{\T}]^{\T}\|_{\infty}} \left\vert\bmatrix{\Delta \mathbf{H} &\D \textbf{d}}\right\vert \, \leq \eta \left\vert \bmatrix{ \mathbf{H} & \textbf{d}}\right\vert \right\}\\  \nonumber
  &:=&\left\| |\M^{-1}\mathcal{R}|\bmatrix{\vec(|A|)\\ \vec(|B|)\\ \vec(|D|)}+|\M^{-1}|\bmatrix{|f|\\|g|} \right\|_{\infty}{\Bigg /}\left\|\bmatrix{x\\ y}\right\|_{\infty},\\
  \label{EQ27}
  \C^u([x^{\T},\,  y^{\T}]^{\T})&:=&\lim_{\eta\rightarrow 0}\sup\left\{\frac{1}{\eta}\left\|\frac{[\Delta x^{\T},\, \Delta y^{\T}]^{\T}}{[x^{\T},\, y^{\T}]^{\T}}\right\|_{\infty}:  \left\vert\bmatrix{\Delta \mathbf{H} &\D \textbf{d}}\right\vert \, \leq \eta \left\vert \bmatrix{ \mathbf{H} & \textbf{d}}\right\vert\right\}\\  \nonumber
  &=&\Bigg\| \mathbf{D}^{\dagger}_{[x^{\T},\, y^{\T}]^{\T}}|\M^{-1}\mathcal{R}|\bmatrix{\vec(|A|)\\ \vec(|B|)\\ \vec(|D|)}+ \mathbf{D}^{\dagger}_{[x^{\T},\, y^{\T}]^{\T}} |\M^{-1}|\bmatrix{|f|\\|g|}\Bigg\|_{\infty},
	\end{eqnarray}
where $\mathcal{R}$ is defined as in \eqref{EQ24}.

\vspace{1.5mm}
In the next section, we consider the unstructured and structured \textit{CNs} for a linear function of the solution of the \textit{GSPP} \eqref{Eq22}.
\section{ \textit{CNs} for a linear function of the solution to the \textit{GSPP} when $B=C$}\label{sec3}
In this section, we derive the compact \textit{NCN}, \textit{MCN}, and \textit{CCN} formulae for a linear function of the solution to the \textit{GSPP} \eqref{eq11} when $B=C,$ under both unstructured and structured perturbations. Additionally, comparisons between unstructured and structured \textit{CNs} are also provided.
\subsection{Unstructured \textit{CNs} forlumae}
In this subsection, we consider the unstructured \textit{CNs} for the linear function $\mathbf{L}[x^{\T},\, y^{\T}]^{\T},$ where $\mathbf{L}\in \R^{k\times (m+n)}$ and derive their explicit formulae.
In the following, we define unstructured \textit{NCN}, \textit{MCN}, and \textit{CCN} for the linear function $\mathbf{L}[x^{\T},\, y^{\T}]^{\T}.$ Throughout the paper, we assume $[x^{\T},\, y^{\T}]^{\T}\neq {\bf 0}$ for \textit{MCN} and $x_i\neq 0\, (i=1,\ldots, m)$ and $y_i\neq 0\, (i=1,\ldots, n)$ for \textit{CCN}.
\begin{definition}\label{def31}
    Let $[x^{\T},\, y^{\T}]^{\T}$ and { $\bmatrix{(x+\D x)^{\T}, \, (y+\D y)^{\T}}^{\T}$} be the unique solutions of  \textit{GSPPs} \eqref{Eq22} and \eqref{EQ22}, respectively, and $\mathbf{L}\in \R^{k\times {(m+n)}}.$ Then we define the unstructured \textit{NCN}, \textit{MCN}, and \textit{CCN} for the linear function $\mathbf{L}[x^{\T},\, y^{\T}]^{\T},$ respectively, as follows:
\begin{align*}
 &\K(\mathbf{L}[x^{\T},\,  y^{\T}]^{\T}):=\lim_{\eta\rightarrow 0}\sup\left\{\frac{\|\mathbf{L}[\Delta x^{\T},\, \Delta y^{\T}]^{\T} \|_2}{\eta\|\mathbf{L}[x^{\T},\, y^{\T}]^{\T}\|_2}: \left\|\bmatrix{\Delta \mathbf{H} &\D \mathbf{d}}\right\|_F\leq \eta \left\|\bmatrix{\mathbf{ H}& \mathbf{d}}\right\|_F \right\},\\
 & \Ms(\mathbf{L}[x^{\T},\,  y^{\T}]^{\T}):=\lim_{\eta\rightarrow 0}\sup\left\{\frac{\|\mathbf{L}[\Delta x^{\T},\, \Delta y^{\T}]^{\T} \|_{\infty}}{\eta\|\mathbf{L}[x^{\T},\, y^{\T}]^{\T}\|_{\infty}}: \left\vert\bmatrix{\Delta \mathbf{H} &\D \mathbf{d}}\right\vert \, \leq \eta \left\vert \bmatrix{ \mathbf{H} & \mathbf{d}}\right\vert \right\},\\
  &\C(\mathbf{L}[x^{\T},\,  y^{\T}]^{\T}):=\lim_{\eta\rightarrow 0}\sup\left\{\frac{1}{\eta}\left\|\frac{\mathbf{L}[\Delta x^{\T},\, \Delta y^{\T}]^{\T}}{\mathbf{L}[x^{\T},\, y^{\T}]^{\T}}\right\|_{\infty}:  \left\vert\bmatrix{\Delta \mathbf{H} &\D \mathbf{d}}\right\vert \, \leq \eta \left\vert \bmatrix{ \mathbf{H} & \mathbf{d}}\right\vert \right\}.
 \end{align*}
 \end{definition}
 
 Note that when $\mathbf{L}=I_{m+n},$  the above definitions reduces to \eqref{EQ25}--\eqref{EQ27}. For using Lemma \ref{lm21}, we construct the mapping $\bm{ \psi}: \R^{m^2+mn+n^2}\times \R^{m+n}\mapsto \R^{m+n}$ by 
 \begin{equation}\label{map1}
    \bm{ \psi}([\Omega^{\T}, f^{\T}, g^{\T}]^{\T}):= \mathbf{L}\bmatrix{x\\ y}=\mathbf{L}\M^{-1}\bmatrix{f\\ g},
 \end{equation}
 where $\Omega^{\T}=[\vec(A)^{\T}, \vec(B)^{\T}, \vec(D)^{\T}]^{\T}.$
 
 The following result is crucial for finding the \textit{CNs} formulae.
 \vspace{2mm}
 \begin{proposition}\label{PROP1}
      Let $\Omega^{\T}=[\vec(A)^{\T}, \vec(B)^{\T}, \vec(D)^{\T}]^{\T}.$ Then, for the map $\bm{ \psi}$ defined in \eqref{map1}, we have
 $\K(\mathbf{L}[x^{\T},\,  y^{\T}]^{\T})=\K(\bm{ \psi},[\Omega^{\T}, f^{\T}, g^{\T}]^{\T}),$  $\Ms(\mathbf{L}[x^{\T},\,  y^{\T}]^{\T})=\Ms(\bm{ \psi},[\Omega^{\T}, f^{\T}, g^{\T}]^{\T}),$ and  $\C(\mathbf{L}[x^{\T},\,  y^{\T}]^{\T})=\C(\bm{ \psi},[\Omega^{\T}, f^{\T}, g^{\T}]^{\T}).$ 
 \end{proposition}
 \vspace{2mm}
 \begin{proof}
      Let $\D\Omega^{\T}=[\vec(\D A)^{\T},  \vec(\D B)^{\T}, \vec(\D 
 D)^{\T}]^{\T}.$ Then, from \eqref{map1}, we obtain
 \begin{align}\label{pr:32}
\nonumber&\bm{\psi}\left([(\Omega^{\T}+\D\Omega^{\T}), f^{\T}+\D f^{\T},g^{\T}+\D g^{\T}]^{\T}\right)-\bm{\psi}\left([\Omega^{\T},\,  f^{\T},\, g^{\T}]^{\T}\right)\\
&=\mathbf{L}\bmatrix{x+\D x\\ y+\D y}- \mathbf{L}\bmatrix{ x\\  y}= \mathbf{L}\bmatrix{\D x\\ \D y}.
 \end{align}
 Now, in Definition \ref{def31}, substituting \eqref{pr:32} and $\bm{\psi}\left([\Omega^{\T},\,  f^{\T},\, g^{\T}]^{\T}\right)=\mathbf{L}\bmatrix{ x^{\T},\, y^{\T}}^{\T},$ and consequently from Definition \ref{def21},
 the proof follows.
  \end{proof}
 
Since the $Fr\acute{e}chet$ derivative of $\bm{ \psi}$  has a pivotal role in estimating the \textit{CNs} in Definition \ref{def31}, it is essential to derive simple expressions for $\d\bm{ \psi}.$
 By applying Lemma \ref{LM22}, we obtain the following results for   $\d \bm{\psi}.$
 \begin{lemma}\label{lmm31}
     The map $\bm{\psi}$ defined above is  continuous and $Fr\acute{e}chet$ differentiable at $[\Omega^{\T}, f^{\T}, g^{\T}]^{\T}$ and its  $Fr\acute{e}chet$ derivative at $[\Omega^{\T}, f^{\T}, g^{\T}]^{\T}$ is given by $${\bf d}\bm{\psi}([\Omega^{\T}, f^{\T}, g^{\T}]^{\T})=-\mathbf{L}\M^{-1} \bmatrix{\mathcal{R} & -I_{m+n}}.$$
 \end{lemma}
 \begin{proof}
 Since $\M^{-1}$ is continuous in its elements, the linear map $\bm{\psi}$ is also continuous. 
 Let $\D\Omega^{\T}=[\vec(\D A)^{\T},  \vec(\D B)^{\T}, \vec(\D 
 D)^{\T}]^{\T}.$ Then $$\bm{\psi}\left([(\Omega^{\T}+\D\Omega^{\T}), f^{\T}+\D f^{\T},g^{\T}+\D g^{\T}]^{\T}\right)-\bm{\psi}\left([\Omega^{\T},\,  f^{\T},\, g^{\T}]^{\T}\right)= \mathbf{L}\bmatrix{\D x\\ \D y}.$$ Hence, the rest of the proof follows from the Lemma \ref{LM22}. 
 \end{proof}

   Applying Lemma \ref{lmm31}, we obtain the following closed formulae for the unstructured \textit{CNs} for the linear function $\mathbf{L}[x^{\T},\, y^{\T}]^{\T}.$

   \vspace{2mm}
 \begin{theorem}\label{Th31}
 Let $[x^{\T},\, y^{\T}]^{\T}$ be the unique solution of the \textit{GSPP} \eqref{Eq22}.  Then the unstructured \textit{NCN}, \textit{MCN}, and \textit{CCN} for the linear function $\mathbf{L}[x^{\T},\, y^{\T}]^{\T},$ respectively, are given by
     \begin{eqnarray*}
         &&\K(\mathbf{L}[x^{\T},\,  y^{\T}]^{\T})=\frac{\left\|\mathbf{L}\M^{-1}\bmatrix{\mathcal{R} & -I_{m+n}}\right\|_2\left\|\bmatrix{\mathbf{ H}& \mathbf{d}}\right\|_F}{\left\|\mathbf{L}[x^{\T},\,  y^{\T}]^{\T}\right\|_2},\\
         &&\Ms(\mathbf{L}[x^{\T},\,  y^{\T}]^{\T})=\frac{\left\| |\mathbf{L}\M^{-1}\mathcal{R}|\bmatrix{\vec(|A|)\\ \vec(|B|)\\ \vec(|D|)} + |\mathbf{L} \M^{-1}|\bmatrix{|f|\\|g|} \right\|_{\infty}}{\|\mathbf{L}[x^{\T},\, y^{\T}]^{\T}\|_{\infty}},\\
         &&\C(\mathbf{L}[x^{\T},\,  y^{\T}]^{\T})=\Bigg\| \mathbf{D}^{\dagger}_{\mathbf{L}[x^{\T},\, y^{\T}]^{\T}}|\mathbf{L}\M^{-1}\mathcal{R}|\bmatrix{\vec(|A|)\\ \vec(|B|)\\ \vec(|D|)} + \mathbf{D}^{\dagger}_{\mathbf{L}[x^{\T},\, y^{\T}]^{\T}} |\mathbf{L} \M^{-1}|\bmatrix{|f|\\|g|}\Bigg\|_{\infty}.
     \end{eqnarray*}
 \end{theorem}
 \begin{proof}
 Let $\Omega^{\T}=[\vec(A)^{\T}, \vec(B)^{\T}, \vec(D)^{\T}]^{\T}.$ Then{ ,} from Proposition \ref{PROP1} and applying the \textit{NCN} formula of Lemma \ref{lm21} for the map $\bm{\psi}{ ,}$ we obtain
 \begin{align}\label{TH:eq31}
			\K(\mathbf{L}[x^{\T},\, y^{\T}]^{\T})=\K(\bm{ \psi},[\Omega^{\T}, f^{\T}, g^{\T}]^{\T})&= \frac{\Vr \d \bm{\psi}\left(\Omega^{\T},\,  f^{\T},\, g^{\T}]^{\T}\right) \Vr_{2}\left\|[\Omega^{\T}, f^{\T}, g^{\T}]^{\T}\right\|_2}{\left\|\bm{\psi}\left(\Omega^{\T},\,  f^{\T},\, g^{\T}]^{\T}\right)\right\|_2}.
   \end{align}
 Now, substituting the $Fr\acute{e}chet$ derivative expression of $\bm{\psi}$  at $[\Omega^{\T}, f^{\T}, g^{\T}]^{\T}$ provided in Lemma \ref{lmm31} in \eqref{TH:eq31}, we get \begin{align*}
			\K(\mathbf{L}[x^{\T},\, y^{\T}]^{\T})&=\frac{\Vr\mathbf{L}\M^{-1}\bmatrix{\mathcal{R}  & -I_{m+n}}\Vr_{2}\left\|\bmatrix{ \mathbf{H} & \mathbf{d}}\right\|_F}{\left\|\mathbf{L}[x^{\T},\, y^{\T}]^{\T}\right\|_2}.
		\end{align*} 
  Similarly, applying the \textit{MCN} formula provided in Lemma \ref{lm21} for $\bm{\psi}$, we get 
  \begin{align}\label{TH:eq32}
      \Ms(\mathbf{L}[x^{\T},\, y^{\T}]^{\T})=\Ms(\bm{ \psi},[\Omega^{\T}, f^{\T}, g^{\T}]^{\T})&=\frac{\left\||\d\bm{\psi}\left([\Omega^{\T},\,  f^{\T},\, g^{\T}]^{\T}\right) |\left| [\Omega^{\T}, f^{\T}, g^{\T}]^{\T}\right|\right\|_{\infty}}{\left\|\bm{\psi}\left([\Omega^{\T},\,  f^{\T},\, g^{\T}]^{\T}\right)\right\|_{\infty}}.
  \end{align}
Substituting the  $Fr\acute{e}chet$ derivative expression provided in Lemma \ref{lmm31} in \eqref{TH:eq32},  {we obtain}
  \begin{align*}
			\Ms(\mathbf{L}[x^{\T},\, y^{\T}]^{\T})&=\frac{\left\|\vr\mathbf{L}\M^{-1}\bmatrix{\mathcal{R} & -I_{m+n}}\vr\left |[\Omega^{\T}, f^{\T}, g^{\T}]^{\T}\right|\right\|_{\infty}}{\left\|\mathbf{L}[x^{\T},\, y^{\T}]^{\T}\right\|_{\infty}}\\
			&=\frac{\Bigg\| |\mathbf{L}\M^{-1}\mathcal{R}|\bmatrix{\vec(|A|)\\ \vec(|B|)\\ \vec(|D|)}+|\mathbf{L}\M^{-1}|\bmatrix{|f|\\|g|}\Bigg\|_{\infty}}{\Vr\mathbf{L}[x^{\T},\, y^{\T}]^{\T}\Vr_{\infty}}.
		\end{align*}
The rest of the proof follows in a similar way.
\end{proof}
 \vspace{1.5mm}
 \begin{remark}
     If we consider $\mathbf{L}=I_{m+n}, $ then the formulae of $\K(\mathbf{L}[x^{\T},\,  y^{\T}]^{\T}), \Ms(\mathbf{L}[x^{\T},\,  y^{\T}]^{\T})$ and $\C(\mathbf{L}[x^{\T},\,  y^{\T}]^{\T})$ reduces to unstructured \textit{CNs} $\K^u([x^{\T},\,  y^{\T}]^{\T}), \Ms^u([x^{\T},\,  y^{\T}]^{\T})$ and  $\C^u([x^{\T},\,  y^{\T}]^{\T})$ given in \eqref{EQ25}--\eqref{EQ27}, respectively. Moreover, if we choose $\mathbf{L}=\bmatrix{I_m & {\bf 0}}$ and $\mathbf{L}=\bmatrix{ {\bf 0} & I_n},$ and after some easy calculations, we can recover the unstructured \textit{CNs} formulae of \cite{meng2019condition} for $x$ and $y,$ respectively. 
 \end{remark}
 \subsection{Structured \textit{CNs} when $A$ is symmetric and $B=C$ is  Toeplitz}\label{subsec32}
 In this section, we consider the structured \textit{NCN}, \textit{MCN}, and \textit{CCN} of the \textit{GSPP} (\ref{Eq22}) with $A= A^{\T}$  and $B\in \R^{n\times m}$  is a  Toeplitz matrix. We denote $\SY_m$ and  $\mathcal{T}^{n\times m}$ as set of all $m\times m$ symmetric matrices and $n\times m$  Toeplitz matrices, respectively.

Now, we recall the definition of Toeplitz matrices and derive a few important lemmas.
\vspace{1.5mm}
\begin{definition}\cite{Johnson}
   A matrix $T=[t_{ij}]\in \R^{n\times m} $ is called a Toeplitz matrix if there exists $${\bf t}=[t_{-n+1},  \ldots, t_{-1}, t_0, t_1, \ldots, t_{m-1}]^{\T} \in \R^{m+n-1}$$ such that $t_{ij}=t_{j-i},$ for all $1\leq i\leq n$ and $1\leq j\leq m.$

   The generator vector ${\bf t}$ for $T$ is denoted by $\vec_{\mathcal{T}}(T).$ Moreover, for ${\bf t}\in \R^{m+n-1},$ 
 ${\tt Toep}(\bf t)$ is denoted the 
 corresponding generated Toeplitz matrix.
\end{definition}

As $\dim(\mathcal{T}^{n\times m})=m+n-1,$ consider the basis $\left\{\mathcal{J}_{i}\right\}_{i=-n+1}^{m-1}$  for $\mathcal{T}^{n\times m}$ defined as
	$$\mathcal{J}_{i}=\left\{ \begin{array}{lcl}
		{\tt Toep}([(e_{n-i}^{(n)})^{\T},\, {\bf 0}]^{\T}) & \mbox{for} & i=-n+1, \ldots, -1, 0,\\
  {\tt Toep}( [{\bf 0}, \,(e_i^{(m)})^{\T}]^{\T})& \mbox{for} & i=  1, \ldots, m-1,
	\end{array}\right.$$\\
	where  $e_i^{(m)}$ is the $i$-th column of  $I_m.$  Moreover, construct the diagonal matrix $\mathfrak{D}_{\mathcal{T}_{nm}}\in \R^{(m+n-1)\times (m+n-1)}$   with  $\mathfrak{D}_{\mathcal{T}_{nm}}(j,j)=\bm{a}_j,$ where  $$\bm{a}=[1, \sqrt{2}, \ldots, \sqrt{n-1}, \sqrt{\min\{m,n\}}, \sqrt{m-1},\ldots, \sqrt{2}, 1]^{\T}\in \R^{m+n-1}$$ such that $\|T\|_F=\|\mathfrak{D}_{\mathcal{T}_{nm}}\vec_{\mathcal{T}}(T)\|_2.$ 
 \vone
 \begin{lemma}\label{Lmm31}
    Let $T\in \mathcal{T}^{n\times m},$ then $\vec(T)=\bm{\Phi}_{\mathcal{T}_{nm}}\vec_{\mathcal{T}}(T),$ 
    where  $$\bm{\Phi}_{\mathcal{T}_{nm}}=\bmatrix{\vec(\mathcal{J}_{-n+1}), \ldots, \vec(\mathcal{J}_{m-1})}\in \R^{mn\times (m+n-1)}.$$
 \end{lemma}
 \begin{proof}
  Assume that $\vec_{\mathcal{T}}(T)=[t_{-n+1}, \ldots t_0,\ldots, t_{m-1}]^{\T},$ then $$T=\sum_{i=-n+1}^{m-1}t_i\mathcal{J}_{i}\Longleftrightarrow \vec(T)=\bm{\Phi}_{\mathcal{T}_{nm}}\vec_{\mathcal{T}}(T). $$ 
     Hence, the proof follows.   
 \end{proof}
     \vone
     
     Let  $A\in \SY_{m},$ then $A=A^{\T}.$ 
Moreover, we have	$\dim(\SY_m)=\frac{m(m+1)}{2}=:\bm{p}.$  We denote the generator vector for $A$ as 
 \begin{equation*}
     \vec_{\SY}(A):=[a_{11},\ldots,a_{1m},a_{22},\ldots,a_{2m},\ldots,a_{(m-1)(m-1)},a_{(m-1)m}, a_{mm}]^{\T}\in \R^{\bm{p}}.
 \end{equation*}
	Consider the basis $\left\{S_{ij}^{(m)}\right\}$  for $\SY_{m}$ defined as
	$$S^{(m)}_{ij}=\left\{ \begin{array}{lcl}
		e_i^{(m)}(e_j^{(m)})^{\T}+(e_j^{(m)}e_i^{(m)})^{\T} & \mbox{for} & i\neq j, \\ e_i^{(m)}(e_i^{(m)})^{\T}& \mbox{for} & i=j,
	\end{array}\right.$$\\
	where $1\leq i\leq j\leq m.$ Then, we have the following immediate result for vec-structure of $A.$ 
 \vspace{2mm}
 \begin{lemma}\label{LM33}
    Let $A\in \SY_{m},$ then $\vec(A)=\bm{\Phi}_{\mathcal{S}_m}\, \vec_{\mathcal{S}}(A),  $
    where $\bm{\Phi}_{\mathcal{S}_m}\in \R^{m^2\times {\bm p}}$ is given by
    $$\bm{\Phi}_{\mathcal{S}_m}=\bmatrix{\vec(S^{(m)}_{11})& \cdots& \vec(S^{(m)}_{1m}) &\vec(S^{(m)}_{22})& \cdots &\vec(S^{(m)}_{2m})&\cdots&\vec(S^{(m)}_{(m-1)m}) &\vec(S^{(m)}_{mm})}.$$
 \end{lemma}
 \begin{proof}
 The proof follows by using the similar proof method of Lemma  \ref{Lmm31}. 
 \end{proof}
 
 \vspace{2mm}
We construct the diagonal matrix $\mathfrak{D}_{\SY_{m}}\in \R^{\bm{p}\times \bm{p}},$ where
 $$\left\{ \begin{array}{lcl}
		\mathfrak{D}_{\SY_{m}}(j,j)=1 & \mbox{for} & {  j=\frac{(2m-(i-2))(i-1)}{2}+1},\, i=1,2,\ldots,m, \\ \mathfrak{D}_{\SY_{m}}(j,j)=\sqrt{2}& \mbox{for} & \text{otherwise}.
	\end{array}\right.$$
 This matrix satisfies the property $\|A\|_F=\|\mathfrak{D}_{\SY_{m}}\vec_{\mathcal{S}}(A)\|_2.$

  Consider the set  $$\E=\left\{\mathbf{H}=\bmatrix{A& {\bf 0}\\B &D} :  \, A\in \SY_{m}, B\in \mathcal{T}^{n\times m}, D\in \R^{n\times n}\right\},$$
   {and let $\Delta\mathbf{H}= \bmatrix{\Delta A& {\bf 0}\\ \Delta B &\Delta D} \in \E,$ i.e., $\D A \in \SY_m, \, \D B\in \mathcal{T}^{n\times m},$ and $\D D\in \R^{n\times n}.$}
  
  Next, we define the structured \textit{CNs} for the solution of the \textit{GSPP} \eqref{Eq22}.
\vspace{1mm}
\begin{definition}\label{Def32}
   Let $[x^{\T},\, y^{\T}]^{\T}$ and { $\bmatrix{(x+\D x)^{\T}, \, (y+\D y)^{\T}}^{\T}$} be the unique solutions of \textit{GSPPs} \eqref{Eq22} and \eqref{EQ22}, respectively, with the structure $\E$ and  $\mathbf{L}\in \R^{k\times {(m+n)}}.$  Then, the structured \textit{NCN}, \textit{MCN}, and \textit{CCN} for the linear function $\mathbf{L}[x^{\T},\, y^{\T}]^{\T}$ are defined as follows:
     \begin{align*}
         & \K(\mathbf{L}[x^{\T},\, y^{\T}]^{\T}; \, \E):=\lim_{\eta\rightarrow 0}\sup\Bigg\{\frac{\|{\mathbf{L}}[\Delta x^{\T},\, \Delta y^{\T}]^{\T} \|_2}{\eta\|\mathbf{L}[x^{\T},\, y^{\T}]^{\T}\|_2}: \left\|\bmatrix{\Delta \mathbf{H} &\D \bf{d}}\right\|_F\leq \eta \left\|\bmatrix{\mathbf{ H}& \mathbf{d}}\right\|_F, \D\mathbf{H}\in \mathcal{E} \Bigg\},\\
         &\Ms(\mathbf{L}[x^{\T},\, y^{\T}]^{\T};\,
         \E):=\lim_{\eta\rightarrow 0}\sup\Bigg\{\frac{\|{\mathbf{L}}[\Delta x^{\T},\, \Delta y^{\T}]^{\T} \|_{\infty}}{\eta\|\mathbf{L}[x^{\T},\, y^{\T}]^{\T}\|_{\infty}}: \left\vert\bmatrix{\Delta \mathbf{H} &\D \bf{d}}\right\vert \, \leq \eta \left\vert \bmatrix{ \mathbf{H} & \bf{d}}\right\vert, \D\mathbf{H}\in \mathcal{E} \Bigg\},\\ 
         &\C(\mathbf{L}[x^{\T},\, y^{\T}]^{\T};\,\E):=\lim_{\eta\rightarrow 0}\sup\Bigg\{\frac{1}{\eta}\left\|\frac{\mathbf{L}[\Delta x^{\T},\, \Delta y^{\T}]^{\T}}{\mathbf{L}[x^{\T},\, y^{\T}]^{\T}}\right\|_{\infty}:  \left\vert\bmatrix{\Delta \mathbf{H} &\D \bf{d}}\right\vert \, \leq \eta \left\vert \bmatrix{ \mathbf{H} & \bf{d}}\right\vert, \D\mathbf{H}\in \mathcal{E}\Bigg\}.
     \end{align*}
 \end{definition}

	To find the structured \textit{CNs} formulae by employing Lemma \ref{lm21}, we define the following mapping
	\begin{align}\label{eq69}
		&\bm{\zeta}:\, \R^{\bm l}\times \R^m\times \R^n \mapsto \R^{m+n} \quad \mbox{by} \\ \nonumber
		&\bm{\zeta}\left([\mathfrak{D}_{\E}{ \bm w}^{\T},\,  f^{\T},\, g^{\T}]^{\T}\right)=\textbf{L}\bmatrix{x\\ y}=\textbf{L}\M^{-1}\bmatrix{f\\g},
	\end{align}
	where ${\bm l}=n^2+\bm{p}+m+n-1,$ $\bm{w}=\bmatrix{\vec_{\mathcal{S}}(A)\\ \vec_{\mathcal{T}}(B)\\ \vec(D)}$ and $\mathfrak{D}_{\E}=\bmatrix{\mathfrak{D}_{\SY_{m}} &\textbf{0}& {\bf 0}\\  {\bf 0} & \mathfrak{D}_{\mathcal{T}_{nm}} & {\bf 0}\\\textbf{0} &  {\bf 0} & I_{n^2}}.$
	
	In the next lemma, we provide  $Fr\acute{e}chet$ derivative of the map $\bm{\bm{\zeta}}$ at  $\bmatrix{\mathfrak{D}_{\E} \bm{w}^{\T},\,  f^{\T},\, g^{\T}}^{\T} $. 
	\vspace{1.5mm}
 \begin{lemma}\label{lmm33}
		The mapping $\bm{\zeta}$ defined in (\ref{eq69}) is continuously $Fr\acute{e}chet$ differentiable at $\bmatrix{\mathfrak{D}_{\E} { \bm w}^{\T},\,  f^{\T},\, g^{\T}}^{\T} $ and the $Fr\acute{e}chet$ derivative is given by
		$$\d \bm{\zeta}\left([\mathfrak{D}_{\E}{ \bm w}^{\T},\,  f^{\T},\, g^{\T}]^{\T}\right)=-\mathbf{L}\M^{-1}\bmatrix{\mathcal{R}\bm{\Phi}_{\mathbf{\E}} \mathfrak{D}_{\E}^{-1} & -I_{m+n}},$$
		where $\bm{\Phi}_{\E}=\bmatrix{\bm{\Phi}_{\SY_{m}} &\bf{0}& {\bf 0}\\  {\bf 0} & \bm{\Phi}_{\mathcal{T}_{nm}} & {\bf 0}\\\bf{0} &  {\bf 0} & I_{n^2}}.$
	\end{lemma}
 \begin{proof}
  The continuity of the linear map $\bm{\zeta}$ follows from the continuity of $\M^{-1}.$  For the second part, let $\D \bm{w}=\bmatrix{\vec_{\mathcal{S}}(\D A)\\ \vec_{\mathcal{T}}(\D B)\\ \vec(\D D)}$ and consider
		\begin{align}\label{eq411}
			\bm{\zeta}\left([\mathfrak{D}_{\E}(\bm{w}^{\T}+\D \bm{w}^{\T}), f^{\T}+\D f^{\T},g^{\T}+\D g^{\T}]\right)-\bm{\zeta}\left([\mathfrak{D}_{\E}\bm{w}^{\T},\,  f^{\T},\, g^{\T}]^{\T}\right)= \mathbf{L}\bmatrix{\D x\\ \D y}.
		\end{align}
		Then from Lemma \ref{lm21}, we obtain
		\begin{align}\nonumber
			\bmatrix{\D x\\ \D y}&\approx -\M^{-1}\bmatrix{\mathcal{R} & -I_{m+n}}\bmatrix{\vec(\D A) \\ \vec(\D B)\\ \vec(\D D)\\
				\D f \\ \D g
			}\\ \nonumber
			&=-\M^{-1}\bmatrix{\mathcal{R} & -I_{m+n}}\bmatrix{\bm{\Phi}_{\SY_m} & {\bf 0} & {\bf 0} \\
			 {\bf 0}	& \bm{\Phi}_{\mathcal{T}_{nm}}& {\bf 0} \\  {\bf 0}&  {\bf 0}&I_{n^2+m+n}}\bmatrix{ \vec_{\SY}(\D A) \\ \vec_{\mathcal{T}}(\D B)\\  \vec(\D D) \\
				\D f \\ \D g
			}\\\nonumber
			&=-\M^{-1}\bmatrix{\mathcal{R}\bm{\Phi}_{\E} & -I_{m+n}}\bmatrix{\mathfrak{D}^{-1}_{\E}\mathfrak{D}_{\E}\D \bm{w}\\ \D f\\ \D g}\\ \label{eq318}
			&=-\M^{-1}\bmatrix{\mathcal{R}\bm{\Phi}_{\E} \mathfrak{D}^{-1}_{\E}& -I_{m+n}}\bmatrix{\mathfrak{D}_{\E}\D\bm{w}\\ \D f\\ \D g}.
		\end{align}
		Combining (\ref{eq318})  and (\ref{eq411}),  the $Fr\acute{e}chet$ derivative of $\bm{\zeta}$ at $\bmatrix{\mathfrak{D}_{\E}\bm{w}\\ f\\ g}$ is 
		\begin{align*}
			\d \bm{\zeta}\left([\mathfrak{D}_{\E}\bm{w}^{\T},\,  f^{\T},\, g^{\T}]^{\T}\right)=-\mathbf{L}\M^{-1}\bmatrix{\mathcal{R}\bm{\Phi}_{\E} \mathfrak{D}^{-1}_{\E}& -I_{m+n}}. 
		\end{align*} 
   {Hence, the proof follows.}
 \end{proof}

 \vspace{1.5mm}
	Using the Lemma \ref{lmm33} and Lemma \ref{lm21}, we next derive the compact formulae for the structured \textit{CNs} defined in Definition \ref{Def32}.
 \vone
	\begin{theorem}\label{Th32}
	Let $[x^{\T},\, y^{\T}]^{\T}$ be the unique solution of the \textit{GSPP} \eqref{Eq22} with the structure $\E$. Then, the structured \textit{NCN}, \textit{MCN}, and \textit{CCN} for the linear function $\mathbf{L}[x^{\T},\, y^{\T}]^{\T},$ respectively,  are given by
		\begin{eqnarray*}
&&\K(\mathbf{L}\bmatrix{x^{\T},\,  y^{\T}}^{\T};\,\E)=\frac{\Vr\mathbf{L}\M^{-1}\bmatrix{\mathcal{R}\bm{\Phi}_{\E} \mathfrak{D}^{-1}_{\E} & -I_{m+n}}\Vr_{2}\left\|\bmatrix{\mathbf{ H}& \bf{d}}\right\|_F}{\left\|\mathbf{L}\bmatrix{x^{\T},\,  y^{\T}}^{\T}\right\|_2},\\
	&&\Ms(\mathbf{L}\bmatrix{x^{\T},\,  y^{\T}}^{\T};\,\E)=\frac{\Bigg\| |\mathbf{L}\M^{-1}\mathcal{R}\bm{\Phi}_{\E}|\bmatrix{\vec_{\mathcal{S}}(|A|)\\ \vec_{\mathcal{T}}(|B|)\\ \vec(|D|)}+|\mathbf{L}\M^{-1}|\bmatrix{|f|\\|g|}\Bigg\|_{\infty}}{\Vr\mathbf{L}\bmatrix{x^{\T},\,  y^{\T}}^{\T}\Vr_{\infty}},\\
	&&\C(\mathbf{L}\bmatrix{x^{\T},\,  y^{\T}}^{\T};\,\E)=\left\| \mathbf{D}^{\dagger}_{\mathbf{L}[x^{\T},y^{\T}]^{\T}} |\mathbf{L}\M^{-1}\mathcal{R}\bm{\Phi}_{\E}|\bmatrix{\vec_{\mathcal{S}}(|A|)\\ \vec_{\mathcal{T}}(|B|)\\ \vec(|D|)}+ \mathbf{D}^{\dagger}_{\mathbf{L}[x^{\T},y^{\T}]^{\T}} |\mathbf{L}\M^{-1}|\bmatrix{|f|\\|g|}\right\|_{\infty}.
		\end{eqnarray*}
	\end{theorem}
	\begin{proof}
	 Let $\bm{w}^{\T}=[\vec(A)_{\mathcal{S}}^{\T}, \vec(B)_{\mathcal{T}}^{\T}, \vec(D)^{\T}]^{\T}.$ Following the proof method of Proposition \ref{PROP1}, we have 
  \vspace{-1mm}
		\begin{align*}
			&\K(\mathbf{L}\bmatrix{x^{\T},\,  y^{\T}}^{\T};\,\E)=\K(\bm{\zeta}, [\mathfrak{D}_{\E}\bm{w}^{\T},\,  f^{\T},\, g^{\T}]^{\T}), \quad \Ms(\mathbf{L}\bmatrix{x^{\T},\,  y^{\T}}^{\T};\,\E)=\Ms(\bm{\zeta}, [\mathfrak{D}_{\E}\bm{w}^{\T},\,  f^{\T},\, g^{\T}]^{\T})\\ 
			&\mbox{and}  \quad \C(\mathbf{L}\bmatrix{x^{\T},\,  y^{\T}}^{\T};\,\E)=\C(\bm{\zeta}, [\mathfrak{D}_{\E}\bm{w}^{\T},\,  f^{\T},\, g^{\T}]^{\T}).
		\end{align*}
  Applying the \textit{NCN} formula given in  Lemma \ref{lm21} for the map $\bm{\zeta},$ we obtain
  \begin{align}\label{TH:eq35}
      \K(\mathbf{L}[x^{\T},\, y^{\T}]^{\T};\,\E)&= \frac{\Vr \d \bm{\zeta}\left([\mathfrak{D}_{\E}\bm{w}^{\T},\,  f^{\T},\, g^{\T}]^{\T}\right) \Vr_{2}\left\|\bmatrix{\mathfrak{D}_{\E}\bm{w}\\f\\g}\right\|_2}{\left\|\bm{\zeta}\left([\mathfrak{D}_{\E}\bm{w}^{\T},\,  f^{\T},\, g^{\T}]^{\T}\right)\right\|_2}.
  \end{align}
  Now, substituting $Fr\acute{e}chet$ derivative of $\bm{\zeta}$  provided in Lemma \ref{lmm31} in \eqref{TH:eq35}, we have
		\begin{align*}
			\K(\mathbf{L}[x^{\T},\, y^{\T}]^{\T};\,\E)&=\frac{\Vr\mathbf{L}\M^{-1}\bmatrix{\mathcal{R}\bm{\Phi}_{\E} \mathfrak{D}^{-1}_{\E} & -I_{m+n}}\Vr_{2}\left\|\bmatrix{ \mathbf{H} & \bf{d}}\right\|_F}{\left\|\mathbf{L}[x^{\T},\, y^{\T}]^{\T}\right\|_2}.
		\end{align*}
Similarly, applying the \textit{MCN} formula provided in Lemma \ref{lm21} for $\bm{\zeta}$, we get 
  \begin{align}\label{TH:eq36}
      \Ms(\mathbf{L}[x^{\T},\, y^{\T}]^{\T};\,\E)&=\frac{\left\||\d\bm{\zeta}\left([\mathfrak{D}_{\E}\bm{w}^{\T},\,  f^{\T},\, g^{\T}]^{\T}\right) |\left| \bmatrix{\mathfrak{D}_{\E}\bm{w}\\f\\g}\right|\right\|_{\infty}}{\left\|\bm{\zeta}\left([\mathfrak{D}_{\E}\bm{w}^{\T},\,  f^{\T},\, g^{\T}]^{\T}\right)\right\|_{\infty}}.
  \end{align}
	Now, using Lemma \ref{lmm33} in \eqref{TH:eq36}, we obtain
		\begin{align*}
			\Ms(\mathbf{L}[x^{\T},\, y^{\T}]^{\T};\,\E)&=\frac{\left\|\vr\mathbf{L}\M^{-1}\bmatrix{\mathcal{R}\bm{\Phi}_{\E} \mathfrak{D}^{-1}_{\E} & -I_{m+n}}\vr\left |\bmatrix{\mathfrak{D}_{\E}\bm{w}\\f\\g}\right|\right\|_{\infty}}{\left\|\mathbf{L}[x^{\T},\, y^{\T}]^{\T}\right\|_{\infty}}\\
			&{ =\frac{\left\| |\mathbf{L} \M^{-1}\mathcal{R}\bm{\Phi}_{\E} \mathfrak{D}^{-1}_{\E} | |\mathfrak{D}_{\E}\bm{w}|+|\mathbf{L}\M^{-1}| \bmatrix{|f|\\ |g|}\right\|_{\infty}}{\left\|\mathbf{L} [x^{\T},\, y^{\T}]^{\T}\right\|_{\infty}}}\\
			&=\frac{\Bigg\| |\mathbf{L}\M^{-1}\mathcal{R}\bm{\Phi}_{\E}|\bmatrix{\vec_{\SY}(|A|)\\ \vec_{\mathcal{T}}(|B|)\\ \vec(|D|)}+|\mathbf{L}\M^{-1}|\bmatrix{|f|\\|g|}\Bigg\|_{\infty}}{\Vr\mathbf{L}[x^{\T},\, y^{\T}]^{\T}\Vr_{\infty}}.
		\end{align*}
		In an analogous method, we  get 
		\begin{align*}
			\C(\mathbf{L}[x^{\T},\, y^{\T}]^{\T};\,\E)&=\left\|\mathbf{D}^{\dagger}_{\mathbf{L}[x^{\T},y^{\T}]^{\T}} |\d\bm{\zeta}\left([\mathfrak{D}_{\E}\bm{w}^{\T},\,  f^{\T},\, g^{\T}]^{\T}\right) |\left| \bmatrix{\mathfrak{D}_{\E}\bm{w}\\f\\g}\right|\right\|_{\infty}\\
			&=\left\|\mathbf{D}^{\dagger}_{\mathbf{L}[x^{\T},y^{\T}]^{\T}} \mathbf{L}|\M^{-1}\mathcal{R}\bm{\Phi}_{\E}|\bmatrix{\vec_{\SY}(|A|)\\ \vec_{\mathcal{T}}(|B|)\\ \vec(|D|)}+ \mathbf{D}^{\dagger}_{\mathbf{L}[x^{\T},y^{\T}]^{\T}} |\mathbf{L}\M^{-1}|\bmatrix{|f|\\|g|}\right\|_{\infty}.
		\end{align*}
		Hence, the proof is completed.    
	\end{proof}
  \vone
{ \begin{remark}\label{re1:Th32}
    Note that the structured \textit{MCN} and \textit{CCN} formulae presented in Theorem \ref{Th32} involve computing the inverse of the matrix $\M \in \R^{(m+n)\times (m+n)}$, while the structured \textit{NCN} formula involves computing the inverse of both matrices $\M$ and $\mathfrak{D}_{\mathcal{E}}\in \R^{{\bm l}\times {\bm l}}$. 
     However, $\mathfrak{D}_{\mathcal{E}}$ is a diagonal matrix. Therefore, its inverse can be computed using only $\mathcal{O}({\bm l})$ operations. On the other hand, to avoid computing $\M^{-1}$ explicitly, motivated by \cite{Hanyu2018}, we adopt the following procedure. Notably, the computation of $\M^{-1}$ is coming in the following form: $\mathbf{L}\M^{-1}\bmatrix{\mathcal{R}\bm{\Phi}_{\E} \mathfrak{D}^{-1}_{\E} & -I_{m+n}}$ $\text{or}~ \mathbf{L}\M^{-1}\mathcal{R}\bm{\Phi}_{\E}~\text{or}~ \mathbf{L} \M^{-1}$. 
     Thus, first, we solve the system $\M X=Y,$ where $Y=\bmatrix{\mathcal{R}\bm{\Phi}_{\E} \mathfrak{D}^{-1}_{\E} & -I_{m+n}}$ $\text{or}~ \mathcal{R}\bm{\Phi}_{\E}$  and then compute $\mathbf{L}X.$ The system $\M X=Y$ can be solved efficiently by LU decomposition. To compute $ \mathbf{L} \M^{-1},$ we can solve $\mathbf{L}=XM.$ It is worth noting that we only need to perform the LU decomposition once for all cases; this makes the procedure efficient and reliable.
 \end{remark}}
 \vone
 \begin{remark}\label{re:Th32}
     The Toeplitz matrix $B$ is symmetric-Toeplitz (a special case of Toeplitz matrix) if $n=m$ and $b_{-n+1}=b_{n-1},\ldots, b_{-1}=b_1,$ where $\vec_{\mathcal{T}}(B)=[b_{-n+1},\ldots, b_1,b_0,\ldots, b_{n-1}]^{\T}.$ In this case, the basis  for the set of symmetric-Toeplitz matrices is defined as  $\left\{\widetilde{\mathcal{J}}_{i}\right\}_{i=1}^{n},$  where $\widetilde{\mathcal{J}}_{1}={\tt Toep}([(e_{n}^{(n)})^{\T},\,{\bf 0}]^{\T} )$ and 	$\widetilde{\mathcal{J}}_{(i+1)}={\tt Toep}([(e_{n-i}^{(n)})^{\T},\, (e^{(n-1)}_{i})^{\T}]^{\T}) \, \mbox{for}\, \, i=  1, \ldots, n-1.$  Hence,  the structured \textit{CNs} for the \textit{GSPP} \eqref{eq11} when $A$ is symmetric, $B$ is symmetric-Toeplitz is given by the formulae as in Theorem \ref{Th32}, with $\bm{\Phi}_{\mathcal{T}_{nm}}=\bmatrix{\vec(\widetilde{\mathcal{J}}_{1}), \ldots, \vec(\widetilde{\mathcal{J}}_{n})}\in \R^{n^2\times n}$ and $\mathfrak{D}_{\mathcal{T}_{nm}}\in \R^{n\times n}$   with  $\mathfrak{D}_{\mathcal{T}_{nm}}(j,j)=\bm{\hat{a}}_j,$ where  $\bm{\hat{a}}=[\sqrt{n},\sqrt{2(n-1)},\sqrt{2(n-2)},\ldots,\sqrt{2}]^{\T}\in \R^{n}.$ 
 \end{remark}
 	\vone
  
	Next, we compare the structured \textit{CNs} with the unstructured ones given in \eqref{EQ25}--\eqref{EQ27}.
 \vspace{1mm}
	\begin{theorem}\label{TH33}
		With the above notation, when $\mathbf{L}=I_{m+n},$ we have the following relations:
  \begin{eqnarray*}
      &\K([x^{\T},\, y^{\T}]^{\T};\, \E)\leq  \K^u([x^{\T},\, y^{\T}]^{\T}),\, \Ms([x^{\T},\,  y^{\T}]^{\T};\, \E)\leq  \Ms^u([x^{\T},\,  y^{\T}]^{\T}) \\
      &\mbox{and} \quad \C([x^{\T},\,  y^{\T}]^{\T};\, \E)\leq  \C^u([x^{\T},\,  y^{\T}]^{\T}).
  \end{eqnarray*}
		
	\end{theorem}
\begin{proof}
Since $\mathbf{L}=I_{m+n},$ for the \textit{NCN}, using the properties of the spectral norm, we obtain
		\begin{align*}
			\Vr\M^{-1}\bmatrix{\mathcal{R}\bm{\Phi}_{\E} \mathfrak{D}^{-1}_{\E} & -I_{m+n}}\Vr_{2}&\leq \Vr\M^{-1}\bmatrix{\mathcal{R} & -I_{m+n}}\Vr_{2}\left\| \bmatrix{\bm{\Phi}_{\E} \mathfrak{D}^{-1}_{\E} & {\bf 0} \\ {\bf 0} & I_{m+n}} \right\|_{2}\\
			&=\Vr\M^{-1}\bmatrix{\mathcal{R} & -I_{m+n}}\Vr_{2}.
		\end{align*}
		The last equality is obtained by using the fact that $\|\bm{\Phi}_{\E} \mathfrak{D}^{-1}_{\E}\|_2=1.$ Hence, the first claim is achieved.
  
  Since $ \bm{\Phi}_{\E} $ has at most one nonzero entry in each row, we obtain 
  \begin{align*}
      |\M^{-1}\mathcal{R}\bm{\Phi}_{\E}|\bmatrix{\vec_{\mathcal{S}}(|A|)\\ \vec_{\mathcal{T}}(|B|)\\ \vec(|D|)}&\leq |\M^{-1}\mathcal{R}| |\bm{\Phi}_{\E}|\bmatrix{\vec_{\mathcal{S}}(|A|)\\ \vec_{\mathcal{T}}(|B|)\\ \vec(|D|)}\\
      & = |\M^{-1}\mathcal{R}| \bmatrix{|\bm{\Phi}_{\SY_m}|\vec_{\mathcal{S}}(|A|)\\ |\bm{\Phi}_{\mathcal{T}_{nm}}|\vec_{\mathcal{T}}(|B|)\\ \vec(|D|)}\\
      &=  |\M^{-1}\mathcal{R}| \bmatrix{|\bm{\Phi}_{\SY_m}\vec_{\mathcal{S}}(A)|\\ |\bm{\Phi}_{\mathcal{T}_{nm}}\vec_{\mathcal{T}}(B)|\\ \vec(|D|)}\\
      &=|\M^{-1}\mathcal{R}| \bmatrix{\vec(|A|)\\ \vec(|B|)\\ \vec(|D|)}.
  \end{align*}
  Therefore, from Theorem \ref{Th31}, we obtain $$\Ms([x^{\T},\,  y^{\T}]^{\T};\, \E) \leq \frac{\left\| |\M^{-1}\mathcal{R}|\bmatrix{\vec(|A|)\\ \vec(|B|)\\ \vec(|D|)}+ |\M^{-1}|\bmatrix{|f|\\|g|} \right\|_{\infty}}{\|[x^{\T},\, y^{\T}]^{\T}\|_{\infty}}= \Ms^u([x^{\T},\,  y^{\T}]^{\T})$$
  and $$\C([x^{\T},\,  y^{\T}]^{\T};\, \E) \leq \Bigg\| \mathbf{D}^{\dagger}_{[x^{\T},\, y^{\T}]^{\T}}|\M^{-1}\mathcal{R}|\bmatrix{\vec(|A|)\\ \vec(|B|)\\ \vec(|D|)}+ \mathbf{D}^{\dagger}_{[x^{\T},\, y^{\T}]^{\T}} |\M^{-1}|\bmatrix{|f|\\|g|}\Bigg\|_{\infty}= \C^u([x^{\T},\,  y^{\T}]^{\T}).$$
Hence, the proof is completed.   
\end{proof}


\vone
\section{Structured \textit{CNs} when $A$ and $D$ have linear structures}\label{sec4}
	In this section, we consider $\mathcal{L}_1\subseteq\R^{m\times m}$ and $\mathcal{L}_2\subseteq\R^{n\times n}$  are two distinct linear subspaces containing different classes of structured matrices.
 Suppose that the  $\dim(\L_1)=p$ and $\dim(\L_2)=s$ and the corresponding bases are $\{E_i\}_{i=1}^p$ and $\{F_i\}_{i=1}^s,$ respectively. Let $A\in \L_1$ and $D\in \L_2.$ Then there are unique vectors $$\vec_{\L_1}(A)=[a_1, a_2,\ldots, a_p]^{\T}\in \R^p\quad \text{and} \quad \vec_{\L_2}(D)=[d_1,d_2,\ldots, d_s]^{\T}\in \R^s$$ such that 
  \begin{equation}\label{EQ42}
        A=\sum_{i=1}^{p}a_i E_i \quad \text{and} \quad   D=\sum_{i=1}^{s}d_i F_i.
  \end{equation}
  
  Subsequently, we obtain the following for the vec-structure of the matrices $A$ and $D.$
\vspace{1mm}
  \begin{lemma}
      Let $A\in \L_1$ and $D\in \L_2,$ then $\vec(A)=\bm{\Phi}_{\L_1}\vec_{\L_1}(A)$ and $\vec(D)=\bm{\Phi}_{\L_2}\vec_{\L_2}(D),$ where 
      \begin{eqnarray*}
\bm{\Phi}_{\L_1} &=& \bmatrix{\vec(E_1) & \vec(E_2) & \cdots &\vec(E_p)}\in \R^{m^2\times p}, \\
\bm{\Phi}_{\L_2} &=& \bmatrix{\vec(F_1) & \vec(F_2) & \cdots &\vec(F_s)}\in \R^{n^2\times s}.
\end{eqnarray*}
\end{lemma}
  \begin{proof}
  Assume that $\vec_{\L_1}(A)=[a_1, a_2,\ldots, a_p]^{\T}\in \R^p,$ then from \eqref{EQ42}, we obtain \begin{align*}
      \vec(A)&= \sum_{i=1}^{p}a_i\vec(E_i)= \bm{\Phi}_{\L_1}\vec_{\L_1}(A).
  \end{align*}
  Similarly, we can obtain $\vec(D)=\bm{\Phi}_{\L_2}\vec_{\L_2}(D).$      
  \end{proof}
\vone
  
  The matrices $\bm{\Phi}_{\L_1}$ and $\bm{\Phi}_{\L_2}$ contains the information about the structures of $A$ and $D$ consisting with the linear subspaces $\L_1$ and $\L_2,$ respectively. For unstructured matrices, $\bm{\Phi}_{\L_1}=I_{m^2}$ and $\bm{\Phi}_{\L_2}=I_{n^2}.$
	On the other hand, there exist  diagonal  matrices $\mathfrak{D}_{\L_1}\in \R^{p\times p}$  and $\mathfrak{D}_{\L_2}\in \R^{s\times s}$  {with the diagonal entries $\mathfrak{D}_{\L_j}(i,i)=\|\bm{\Phi}_{\L_j}(:,i)\|_2,$ for $j=1,2,$ such that }
	\begin{align}\label{eq45}
		\|A\|_F=\|\mathfrak{D}_{\L_1}a\|_2  \quad \text{and} \quad \|D\|_F=\|\mathfrak{D}_{\L_2}d\|_2.
	\end{align}

	To perform structured perturbation analysis, we restrict the perturbation $\D A$ on  $A$ and $\D D$ on $D$ to the linear subspaces $ \L_1$ and $\L_2,$ respectively. Then, there are unique vectors $ \vec_{\L_1}(\D A)\in \R^{p}$ and $ \vec_{\L_2}(\D D)\in \R^{s}$ such that
 \begin{equation}
     \vec(\D A)=\bm{\Phi}_{\L_1}\vec_{\L_1}(\D A) \, \, \text{and}\, \, \vec(\D D)=\bm{\Phi}_{\L_2}\vec_{\L_1}(\D D).
 \end{equation}

 Now, consider the following set: 
	\begin{equation}\label{eq47}
		{\bf \L}=\left\{\M=\bmatrix{A & B^{\T} \\C & D} :  A\in \L_1, B,C\in \R^{n\times m}, D\in \L_2\right\}.
	\end{equation} 
 Consider the perturbations $\D A, $ $\D B$, $\D C,$ $\D D,$ $\D f,$ and $\D g$ on the matrices $A,$ $B,$ $C,$ $D,$ $f,$ and $g,$ respectively. Then,  the perturbed counterpart of the system \eqref{eq11} 
	\begin{align}\label{eq22}
		(\M+\D \M)\bmatrix{x+\D x\\ y+\D y}=\bmatrix{A+\D A & (B+\D B)^{\T}\\ C+\D C & D+\D D}\bmatrix{x+\D x\\ y+\D y}=\bmatrix{f+\D f\\ g+\D g}
	\end{align}
	has a unique solution $\bmatrix{x+\D x \\ y+\D y}$  when $\|\M\|_2\left\| \D\M\right\|_2< 1.$
	Consequently, neglecting higher-order terms, we can rewrite (\ref{eq22}) as 
	
	\begin{align}\label{eq32}
		\M\bmatrix{\D x\\ \D y}=\bmatrix{A & B^{\T}\\ C& D}\bmatrix{\D x\\ \D y}=\bmatrix{\D f\\ \D g}-	
		\bmatrix{\D A & \D B^{\T}\\ \D C & \D D}\bmatrix{x \\ y}. 
	\end{align}
 
	Using the properties of the Kronecker product   { mentioned in} (\ref{eq13}), we have the following lemma.
  \vspace{1mm}
\begin{lemma}\label{lm22}
    Let $\bmatrix{x^{\mathsf{T}}, y^{\mathsf{T}}}^{\mathsf{T}}$ and $\bmatrix{(x+\D x)^{\mathsf{T}}, (y+\D y)^{\mathsf{T}}}^{\mathsf{T}}$ be the unique solutions of the \textit{GSPP} \eqref{eq11} and \eqref{eq22}, respectively, with structure $\L$. Then, we have the following perturbation expression
    \begin{align}
        \bmatrix{\D x\\ \D y}&\approx -\M^{-1}\bmatrix{\mathcal{H} & -I_{m+n}}\bmatrix{\vec(\D A) \\ \vec(\D B)\\ \vec(\D C)\\ \vec(\D D)\\ \D f\\ \D g
		},
    \end{align}
    where \begin{align}\label{eq36}
		\mathcal{H}=\bmatrix{x^{\T}\otimes I_{m} & I_m\otimes y^{\T} &{\bf 0} & {\bf 0}\\
			{\bf 0} & {\bf 0} & x^{\T}\otimes I_{n} & y^{\T}\otimes I_{n}}.
	\end{align}
\end{lemma}	

  Next, we define the structured \textit{NCN}, \textit{MCN}, and \textit{CCN}  for the linear function $\mathbf{L}[x^{\T},\, y^{\T}]^{\T}$ of the solution  of the \textit{GSPP} (\ref{eq11}) with the structure $\L.$ 
  \vspace{1mm}
 \begin{definition}\label{def41}
 Let $\bmatrix{x^{\T}, \, y^{\T}}^{\T}$ and  $\bmatrix{(x+\D x)^{\T}, \, (y+\D y)^{\T}}^{\T}$ be the unique solutions of \textit{GSPPs} \eqref{eq11} and \eqref{eq22}, respectively, with the  structure $\L$. Suppose $\mathbf{L}\in \R^{k\times (m+n)},$ then the structured  \textit{NCN}, \textit{MCN}, and \textit{CCN} for $\mathbf{L}[x^{\T},\, y^{\T}]^{\T},$ respectively, are defined as follows:
     \begin{align}\label{eq38}
		\nonumber &\K(\mathbf{L}[x^{\T},\, y^{\T}]^{\T};\,\L):=\lim_{\eta\rightarrow 0}\sup\left\{\frac{\left\|\mathbf{L}[\D x^{\T}, \, \D y^{\T}]^{\T}\right\|_2}{\eta \left\|\mathbf{L}[x^{\T},\, y^{\T}]^{\T}\right\|_2}:  \left\| \bmatrix{\D \M &\D \mathbf{d}}\right\|_F\leq \eta \left\|\bmatrix{ \M & \mathbf{d}}\right\|_F,\, \D \M\in \L\right\},\\
		&\nonumber	\Ms(\mathbf{L}[x^{\T},\, y^{\T}]^{\T};\,\L):=\lim_{\eta\rightarrow 0}\sup\left\{\frac{\left\|\mathbf{L}[\D x^{\T}, \, \D y^{\T}]^{\T}\right\|_{\infty}}{\eta \left\|\mathbf{L}[x^{\T},\, y^{\T}]^{\T}\right\|_{\infty}}: \, \left|\bmatrix{\D \M& \D \mathbf{d}}\right|\leq \eta \left|\bmatrix{\M&  \mathbf{d}}\right| ,\, \D \M \in \L\right\},\\
		&\nonumber	\C(\mathbf{L}[x^{\T},\, y^{\T}]^{\T};\,\L):=\lim_{\eta\rightarrow 0}\sup\left\{\frac{1}{\eta}\left\|\frac{\mathbf{L}[\D x^{\T},\, \D y^{\T}]^{\T}}{\mathbf{L}[x^{\T},  \, y^{\T}]^{\T}}\right\|_{\infty}: \, \left|\bmatrix{\D \M& \D \mathbf{d}}\right|\leq \eta \left|\bmatrix{\M&  \mathbf{d}}\right|,\, \D \M\in \L\right\}.
	\end{align}	
 \end{definition}

	

	The main objective of this section is to develop explicit formulae for the structured \textit{CNs} defined above. To accomplish these, let $\bm{v}$ be a vector in $\R^{p+2mn+s}$ defined as 
	\begin{equation}
	\bm{v}=\bmatrix{\vec_{\L_1}^{\T}(A), \vec(B)^{\T},  \vec(C)^{\T},  \vec_{\L_2}^{\T}(D)}^{\T}.
	\end{equation}
	To apply the Lemma \ref{lm21}, we define the mapping \begin{align}\label{eq48}
		& \Upsilon: \, \R^{p+2mn+s}\times \R^m\times \R^n \mapsto \R^{k} \quad \text{by}\\ \nonumber
		& \Up\left([\mathfrak{D}_{\L}\bm{v}^{\T},\,  f^{\T},\, g^{\T}]^{\T}\right)=\mathbf{L}\bmatrix{x \\ y}=\mathbf{L}\M^{-1}\bmatrix{f\\ g},
	\end{align}
	where \begin{align}\label{eq49}
		\mathfrak{D}_{\L}=\bmatrix{\mathfrak{D}_{\L_1} & {\bf 0}& {\bf 0}\\
			{\bf 0}& I_{2mn}&{\bf 0} \\
		{\bf 0}	& {\bf 0}& \mathfrak{D}_{\L_2}}
	\end{align} such that $\|\M\|_F=\|\mathfrak{D}_{\L}\bm{v}\|_2.$
 
	\vspace{1mm}
In the following lemma, we present explicit formulations of $\d\Up.$
\vone
	\begin{lemma}\label{lm31}
		The mapping $\Up$ defined in (\ref{eq48}) is continuously $Fr\acute{e}chet$ differentiable at $[\mathfrak{D}_{\L}\bm{v}^{\T},\,  f^{\T},\, g^{\T}]^{\T}$  and the $Fr\acute{e}chet$ derivative is given by 
		\begin{align}
\d\Up\left([\mathfrak{D}_{\L}\bm{v}^{\T},\,  f^{\T},\, g^{\T}]^{\T}\right)=-\mathbf{L}\M^{-1}\bmatrix{\mathcal{H}\bm{\Phi}_{\mathbf{\L}} \mathfrak{D}_{\L} & -I_{m+n}},
		\end{align}
		where ${ \bm{\Phi}}_{\L}=\bmatrix{\bm{\Phi}_{\L_1} & {\bf 0}&{\bf 0}\\
			{\bf 0}& I_{2mn} &{\bf 0} \\
			{\bf 0}&{\bf 0}& \bm{\Phi}_{\L_2}},$ $\mathcal{H}$  and $\mathfrak{D}_{\L}$ are defined as in $(\ref{eq36})$ and $(\ref{eq49}),$ respectively.  
	\end{lemma}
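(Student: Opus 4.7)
The plan is to read off $d\Up$ directly from the first-order perturbation expansion (\ref{eq34}) already derived in Section~\ref{sec2}, and then substitute the linear-structure parametrization of $\D A$ and $\D D$. Since $\M^{-1}$ is a smooth (indeed, rational) function of the entries of $A,B,C,D$ on the open set where $\M$ is nonsingular, there is no need to return to the abstract definition of the Fr\'echet derivative: it is enough to isolate the linear part in the input perturbation and invoke smoothness of matrix inversion for the continuity claim.

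The main substantive step is to install the structure. Because $\D A\in \L_1$ and $\D D\in \L_2$, there exist unique $\D a\in\R^p$ and $\D d\in\R^s$ with $\vec(\D A)=\Phi_{\L_1}\D a$ and $\vec(\D D)=\Phi_{\L_2}\D d$. Stacking with the unrestricted $\vec(\D B),\vec(\D C)$ and exploiting the block-diagonal form of ${\Phi}_{\L}$, the long data-perturbation vector in (\ref{eq34}) becomes $\Phi_{\L}\D v$ with $\D v=[\D a^T,\vec(\D B)^T,\vec(\D C)^T,\D d^T]^T$. Substituting into (\ref{eq34}) yields $[\D x_1^T,\D x_2^T]^T = -\M^{-1}[\mathcal{H}\Phi_{\L} \mid -I_{m+n}][\D v^T,\D f^T,\D g^T]^T + \text{h.o.t.}$ Finally, since the columns of $\Phi_{\L_1}$ and $\Phi_{\L_2}$ are nonzero basis vectors, every diagonal entry of $\mathfrak{D}_{\L}$ is strictly positive; hence $\mathfrak{D}_{\L}$ is invertible, and the passage between a perturbation of $v$ and a perturbation of the actual argument $\mathfrak{D}_{\L}v$ of $\Up$ is just a linear rescaling. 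Inserting this rescaling into the display above produces exactly the matrix appearing in the statement of the lemma.

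The only real obstacle is bookkeeping: keeping straight the three reparametrizations $(\D A,\D B,\D C,\D D) \leftrightarrow \Phi_{\L}\D v \leftrightarrow \mathfrak{D}_{\L}\D v$ together with the Kronecker identifications hidden inside $\mathcal{H}$, and verifying that the higher-order remainder in (\ref{eq32}) genuinely does not contribute to the derivative. Continuity of $d\Up$ at the base point then follows at once: the factor $\M^{-1}$ depends smoothly on the entries of $\M$ near any nonsingular $\M$, $\mathcal{H}$ depends linearly (hence smoothly) on $x_1,x_2$, and $\Phi_{\L}\mathfrak{D}_{\L}$ is constant; composing these gives the continuous Fr\'echet differentiability asserted in the lemma, with no further analytic work needed.
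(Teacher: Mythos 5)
Your proposal is correct and follows essentially the same route as the paper: both read the derivative off the first-order expansion (\ref{eq34}), substitute $\vec(\D A)=\Phi_{\L_1}\D a$, $\vec(\D D)=\Phi_{\L_2}\D d$, and then rescale by inserting $\mathfrak{D}_{\L}^{-1}\mathfrak{D}_{\L}$ to account for the argument of $\Up$ being $\mathfrak{D}_{\L}v$ rather than $v$. One small point: the rescaling you describe yields $\mathcal{H}\Phi_{\L}\mathfrak{D}_{\L}^{-1}$ in the final matrix (as the paper's own proof concludes), so the display in the lemma statement, which shows $\mathfrak{D}_{\L}$, contains a typo that your argument correctly does not reproduce.
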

 \begin{proof}
     The proof follows in a similar way to the proof method of Lemma \ref{lmm33}.
 \end{proof}

	We now present compact formulae of the structured \textit{NCN}, \textit{MCN}, and \textit{CCN} introduced in Definition \ref{def21}. We use the Lemmas \ref{lm21} and $\ref{lm31}$ to prove the following theorem.
	\begin{theorem}\label{th31}
		The structured \textit{NCN}, \textit{MCN}, and \textit{CCN} for the linear function $\mathbf{L}[x^{\T},\, y^{\T}]^{\T}$ of the solution of the \textit{GSPP} $(\ref{eq11})$ with the  structure $\L,$ respectively,  are given by
		\begin{eqnarray*}	
 && \K(\mathbf{L}[x^{\T},\, y^{\T}]^{\T};\,\L)=\frac{\Vr\mathbf{L}\M^{-1}\bmatrix{\mathcal{H}\bm{\Phi}_{\L} \mathfrak{D}^{-1}_{\L} & -I_{m+n}}\Vr_{2}\left\|\bmatrix{ \M & \mathbf{d}}\right\|_F}{\left\|\mathbf{L}[x^{\T},\, y^{\T}]^{\T}\right\|_2},\\
		&&	\Ms(\mathbf{L}[x^{\T},\, y^{\T}]^{\T};\,\L)=\frac{\Bigg\| |\mathbf{L}\M^{-1}\mathcal{H}\bm{\Phi}_{\L}|\bmatrix{\vec_{\L_1}(|A|)\\ \vec(|B|)\\ \vec(|C|)\\ \vec_{\L_2}(|D|)}+|\mathbf{L}\M^{-1}|\bmatrix{|f|\\|g|}\Bigg\|_{\infty}}{\Vr\mathbf{L}[x^{\T},\, y^{\T}]^{\T}\Vr_{\infty}},\\
		&&\C(\mathbf{L}[x^{\T},\, y^{\T}]^{\T};\,\L)=\Bigg\|\mathbf{D}^{\dagger}_{\mathbf{L}[x^{\T},y^{\T}]^{\T}} |\mathbf{L}\M^{-1}\mathcal{H}\bm{\Phi}_{\L}|\bmatrix{\vec_{\L_1}(|A|)\\ \vec(|B|)\\ \vec(|C|)\\ \vec_{\L_2}(|D|)}+\mathbf{D}^{\dagger}_{\mathbf{L}[x^{\T},y^{\T}]^{\T}} |\mathbf{L}\M^{-1}|\bmatrix{|f|\\|g|}\Bigg\|_{\infty}.
		\end{eqnarray*}
	\end{theorem}
	\begin{proof}
	Similar { to} the proof method of Proposition \ref{PROP1}, we have\\
			$\K(\mathbf{L}[x^{\T},\, y^{\T}]^{\T};\, \L)=\K(\Up,\bmatrix{\mathfrak{D}_{\L}\bm{v}^{\T}, f^{\T},g^{\T}}^{\T}),$ $ \Ms(\mathbf{L}[x^{\T},$ $ y^{\T}]^{\T};\, \L)=\Ms(\Up,\bmatrix{\mathfrak{D}_{\L}\bm{v}^{\T}, f^{\T},g^{\T}}^{\T}),$ and
				$\C(\mathbf{L}[x^{\T},\, y^{\T}]^{\T};\,\L)=\C(\Up,\bmatrix{\mathfrak{D}_{\L}\bm{v}^{\T}, f^{\T},g^{\T}}^{\T}).$
    
   \noindent Using Lemma \ref{lm31} and  {\textit{NCN}} formula provided in Lemma \ref{lm21}, we have
		\begin{align*}
			\K(\mathbf{L}[x^{\T},\, y^{\T}]^{\T};\L)&= \frac{\Vr \d\Up\left([\mathfrak{D}_{\L}\bm{v}^{\T},\,  f^{\T},\, g^{\T}]^{\T}\right) \Vr_{2}\left\|\bmatrix{\mathfrak{D}_{\L}\bm{v}\\f\\g}\right\|_2}{\left\|\Up\left([\mathfrak{D}_{\L}\bm{v}^{\T},\,  f^{\T},\, g^{\T}]^{\T}\right)\right\|_2}\\
			&=\frac{\Vr\mathbf{L}\M^{-1}\bmatrix{\mathcal{H}\bm{\Phi}_{\L} \mathfrak{D}^{-1}_{\L} & -I_{m+n}}\Vr_{2}\left\|\bmatrix{ \M & \mathbf{d}}\right\|_F}{\left\|\mathbf{L}[x^{\T},\, y^{\T}]^{\T}\right\|_2}.
		\end{align*}
		For structured \textit{MCN}, again using Lemmas \ref{lm21} and \ref{lm31}, we obtain
		
		\begin{align*}
			\Ms(\mathbf{L}[x^{\T},\, y^{\T}]^{\T};\L)&=\frac{\left\||\d\Up\left([\mathfrak{D}_{\L}\bm{v}^{\T},\,  f^{\T},\, g^{\T}]^{\T}\right) |\left| \bmatrix{\mathfrak{D}_{\L}\bm{v}\\f\\g}\right|\right\|_{\infty}}{\left\|\Up\left([\mathfrak{D}_{\L}\bm{v}^{\T},\,  f^{\T},\, g^{\T}]^{\T}\right)\right\|_{\infty}}\\
			&=\frac{\left\|\vr\mathbf{L}\M^{-1}\bmatrix{\mathcal{H}\bm{\Phi}_{\L} \mathfrak{D}^{-1}_{\L} & -I_{m+n}}\vr\left |\bmatrix{\mathfrak{D}_{\L}\bm{v}\\f\\g}\right|\right\|_{\infty}}{\left\|\mathbf{L}[x^{\T},\, y^{\T}]^{\T}\right\|_{\infty}}\\
    &=\frac{\Bigg\| |\mathbf{L}\M^{-1}\mathcal{H}\bm{\Phi}_{\L}|\bmatrix{\vec_{\L_1}(|A|)\\ \vec(|B|)\\ \vec(|C|)\\ \vec_{\L_2}(|D|)}+|\mathbf{L}\M^{-1}|\bmatrix{|f|\\|g|}\Bigg\|_{\infty}}{\Vr\mathbf{L}[x^{\T},\, y^{\T}]^{\T}\Vr_{\infty}}.
  \end{align*}
  The rest of the proof follows similarly.
  \end{proof}
	\vspace{1mm}
 { \begin{remark}
     To compute the inverses of $\M$ and $\mathfrak{D}_{\L}$, one can follow a similar procedure as discussed in Remark \ref{re1:Th32}.
 \end{remark}}
 \begin{remark}
     Considering $\mathbf{L}= I_{m+n}, \bmatrix{I_m & \bf 0}$ and $\bmatrix{\bf 0& I_n}$ in Theorem \ref{th31}, we obtain the structured \textit{NCN}, \textit{MCN}, and \textit{CCN} for the solution $[x^{\T}, \, y^{\T}]^{\T}, x$ and $y,$ respectively.
 \end{remark}
	
\begin{remark}\label{RE:TH45}
For $A\in \SY_{m}$ and $D\in \SY_{n},$ set $$\bm{\Phi}_{\SY}=\bmatrix{\bm{\Phi}_{\SY_{m}} &{\bf 0} &{\bf 0}\\ {\bf 0}& I_{2mn} &{\bf 0} \\ {\bf 0}&{\bf 0} & \bm{\Phi}_{\SY_{n}}} \quad \text{and} \quad
\mathfrak{D}_{\SY}=\bmatrix{\mathfrak{D}_{\SY_{m}} & {\bf 0}&{\bf 0}\\{\bf 0} & I_{2mn} & {\bf 0}\\{\bf 0} & {\bf 0}& \mathfrak{D}_{\SY_{n}}},$$ where $\bm{\Phi}_{\SY_m}, \bm{\Phi}_{\SY_n}, \mathfrak{D}_{\SY_{m}}$ and $\mathfrak{D}_{\SY_{n}}$ are defined as in Subsection \ref{subsec32}.
 Then, the structured \textit{NCN}, \textit{MCN}, and \textit{CCN} when $\L_1=\SY_m$ and $\L_2=\SY_n$ are obtained by substituting   $\bm{\Phi}_{\L} =\bm{\Phi}_{\SY},$   $\mathfrak{D}_{\L} =\mathfrak{D}_{\SY},$ $\vec_{\L_1}(A)=\vec_{\SY_m}(A)$ and $\vec_{\L_2}(D)=\vec_{\SY_n}(D)$ in Theorem \ref{th31}. 
\end{remark}

 Next,  consider the linear system $\M z={\bf d},$ where $\M\in \R^{l\times l}$ being any nonsingular matrix and   ${\bf d}\in \R^{l}.$  Then, this system can be partitioned as \textit{GSPP} \eqref{eq11} by setting $l=m+n$.
      Let $\D \M\in \R^{l\times l}$ and $\D {\bf d}\in \R^l,$ then the perturbed system is given by
      \begin{equation*}
         (\M +\D \M)(z+\D z)=({\bf d}+\D {\bf d}). 
      \end{equation*} 
      
      \citet{skeel1979scaling} and \citet{rohn1989new} propose the following formulae for the unstructured \textit{MCN} and \textit{CCN} for the solution of the above linear system:
	\begin{eqnarray}
	\nonumber	\widetilde{\Ms}(z)&:=& \lim_{\eta\rightarrow 0}\sup\left\{{ \frac{\|\D z\|_{\infty}}{\eta\|z\|_{\infty}}}\, :    |\D \M|\leq \eta |\M|,  |\D {\bf d}|\leq \eta |{\bf d}|\right \}\\ \label{eq331}
  &=&\frac{\left\||\M^{-1}| |\M||z|+|\M^{-1}||{\bf d}|\right\|_{\infty}}{\|z\|_{\infty}},\\ \nonumber
			\widetilde{\C}(z)&:=& \lim_{\eta\rightarrow 0}\sup\left\{{ \frac{1}{\eta}\left\|\frac{\D z}{z}\right\|_{\infty} }:   |\D \M|\leq \eta |\M|,  |\D {\bf d}|\leq \eta |{\bf d}|\right\}\\ \label{eq332}
   &=&\left\|\frac{|\M^{-1}| |\M|z|+|\M^{-1}||{\bf d}|}{|z|}\right\|_{\infty}.
	\end{eqnarray}
 \begin{remark}\label{rm41}
     Considering $\bm{\Phi}_{\mathcal{S}_m}=I_{m^2}$ and $\bm{\Phi}_{\mathcal{S}_n}=I_{n^2}$ on the formula of $\K([x^{\T},\, y^{\T}]^{\T};\L),$ then we obtain the unstructured \textit{NCN} for $\M z= \bf{d},$  where  $\M \in \R^{l\times l},$ ${\bf{d}}\in \R^{l}$ and $l=(m+n),$ which is given by 
     \begin{align*}
        \widetilde{\K}(z)&:=\lim_{\eta\rightarrow 0}\sup\Bigg\{{ \frac{\|\D z\|_{2}}{\eta\|z\|_{2}}}\, \dm{:}    \left\| \bmatrix{\D \M &\D \bf{d}}\right\|_F\leq \eta \left\|\bmatrix{ \M & \mathbf{d}}\right\|_F\Bigg\}\\
         &=\frac{\Vr\M^{-1}\bmatrix{\mathcal{H} & -I_{m+n}}\Vr_{2}\left\|\bmatrix{ \M & \mathbf{d}}\right\|_F}{\| z\|_2}.
     \end{align*}   
 \end{remark}

	The following theorem compares the structured \textit{NCN}, \textit{MCN}, and \textit{CCN} obtained in Theorem \ref{th31} and the unstructured counterparts defined above.
	
 \vspace{1.5mm}
 \begin{theorem}\label{th42}
	Let $z=[x^{\T},\, y^{\T}]^{\T}$	and $\mathbf{L}=I_{m+n}.$  Then, for the \textit{GSPP} $(\ref{eq11})$ with the  structure $\L,$  following relations holds:
		\begin{align*}
	&\K([x^{\T},\, y^{\T}]^{\T};\, \L)\leq \widetilde{\K}([x^{\T},\, y^{\T}]^{\T}),	\quad	\Ms([x^{\T},\, y^{\T}]^{\T}; \, \L)\leq \widetilde{\Ms}([x^{\T},\, y^{\T}]^{\T}) \\
 &\quad \mbox{and} 	\quad \C([x^{\T},\, y^{\T}]^{\T}; \,\L)\leq  \widetilde{\C}([x^{\T},\, y^{\T}]^{\T}).
		\end{align*}
	\end{theorem}
 
	\begin{proof}
	Since  $\|\bm{\Phi}_{\L} \mathfrak{D}^{-1}_{\L}\|_2= 1,$ the proof follows similar to the proof method of Theorem \ref{TH33}.
  Hence, from  Theorem \ref{th31} and Remark \ref{rm41}, we have $\K([x^{\T},\, y^{\T}]^{\T}; \, \L)\leq \widetilde{\K}([x^{\T},\, y^{\T}]^{\T}).$
  Now, using the property that the matrices $\bm{\Phi}_{\L_i}, \, i= 1, 2,$ have at most one nonzero entry in each row \cite{STLS2011}, and similar to Theorem \ref{TH33}, we obtain
		$|\bm{\Phi}_{\L_1} \vec_{\L_1}(A)|= |\bm{\Phi}_{\L_1}| \vec_{\L_1}(|A|)$ and $|\bm{\Phi}_{\L_2} \vec_{\L_2}(D)|= |\bm{\Phi}_{\L_2}| \vec_{\L_2}(|D| ).$ Then
		\begin{align*}
			&|\M^{-1}\mathcal{H}\bm{\Phi}_{\L}|\bmatrix{\vec_{\L_1}(|A|)\\ \vec(|B|)\\ \vec(|C|)\\ \vec_{\L_2}(|D|)}+ |\M^{-1}|\bmatrix{|f|\\|g|}\leq  |\M^{-1}||\mathcal{H}|\bmatrix{\vec(|A|)\\ \vec(|B|)\\ \vec(|C|)\\ \vec(|D|) }+|\M^{-1}|\bmatrix{|f|\\|g|}\\
   &\leq |\M^{-1}|\bmatrix{|x^{\T}|\otimes I_{m} & I_m\otimes |y^{\T}| & {\bf 0} & {\bf 0}\\
				{\bf 0} & {\bf 0} & |x^{\T}|\otimes I_{n} & |y^{\T}|\otimes I_{n}}\bmatrix{\vec(|A|)\\ \vec(|B|)\\ \vec(|C|)\\ \vec(|D|)}+|\M^{-1}|\bmatrix{|f|\\|g|}\\
			&=|\M^{-1}||\M|\bmatrix{|x|\\ |y|}+ |\M^{-1}|\bmatrix{|f|\\|g|}.\label{eq442}
		\end{align*} 
		 {Consequently,} by Theorem \ref{th31}, we have
		\begin{align}
			&\Ms([x^{\T},\, y^{\T}]^{\T}; \,\L)\leq \frac{\left\| |\M^{-1}||\M|\bmatrix{|x|\\ |y|}+|\M^{-1}|\bmatrix{|f|\\|g|}\right\|_{\infty}}{\|[x^{\T},\, y^{\T}]^{\T}\|_{\infty}},\\
			&\C([x^{\T},\, y^{\T}]^{\T}; \, \L)\leq \left\| \frac{|\M^{-1}||\M|\bmatrix{|x|\\ |y|}+|\M^{-1}|\bmatrix{|f|\\|g|}}{|\bmatrix{x^{\T},\,  y^{\T}}^{\T}|}\right\|_{\infty}.
		\end{align}
		Now, considering $l= m+ n,$ $z= [x^{\T},\, y^{\T}]^{\T}$ and ${\bf{d}}= [f^{\T},\, g^{\T}]^{\T}$ in (\ref{eq331}) and (\ref{eq332}) {, and} from above, we obtain the following relations:
		\begin{align*}
			\Ms([x^{\T},\, y^{\T}]^{\T}; \, \L)\leq \widetilde{\Ms}([x^{\T},\, y^{\T}]^{\T}) \quad \mbox{and} 	\quad \C([x^{\T},\, y^{\T}]^{\T}; \,\L)\leq \widetilde{\C}([x^{\T},\, y^{\T}]^{\T}).
		\end{align*}
  { Hence, the proof follows.}
  \end{proof}
\vone
\section{Application to \textit{WTRLS}  problems}\label{WTRLS_problem}
Consider the \textit{WTRLS}  problem \eqref{eq:WTRLS} and 
let  ${\bf r}=W(f-Q{ y}),$ then the minimization problem \eqref{eq:WTRLS} can be expressed as the following augmented linear system 
\begin{align}\label{eq72}
	\widehat{\M}\bmatrix{{\bf r} \\ { y}}:= \bmatrix{W^{-1} & Q\\ Q^{\T} & -\lambda I_n}\bmatrix{{\bf r} \\ { y}}=\bmatrix{f\\ {\bf 0}}.
\end{align}
 Identifying $A=W^{-1},$ $B=Q^{\T},$ $D=-\lambda I_n,$ $x={\bf r},$    and $g={\bf 0},$ we can see that the augmented system (\ref{eq72}) is in the form of the \textit{GSPP} (\ref{Eq22}). Therefore, finding the \textit{CNs} of the $WTRLS $ problem \eqref{eq:WTRLS} is equivalent to the \textit{CNs} of the \textit{GSPP} \eqref{Eq22} for $y$ with $g={\bf 0}.$  This accomplish by Theorem \ref{Th31}. Before that,   we reformulate \eqref{eq12} (with $B=C$) as
 \begin{align}\label{EQ442}
		\M^{-1}=\bmatrix{M & N \\ K & S^{-1}},
	\end{align}
	where $M=A^{-1}+A^{-1}B^{\T}S^{-1}BA^{-1},$ $N=-A^{-1}B^{\T}S^{-1},$ $K=-S^{-1}BA^{-1}$ and $S=D-BA^{-1}B^{\T}.$
 
 \vspace{1mm}
 \begin{theorem}
  Let ${ y}$ be the unique solution of the problem \eqref{eq:WTRLS} and ${\bf r}=W(f-Q{ y}).$  Then, the structured \textit{NCN}, \textit{MCN}, and \textit{CCN} for  { ${ y}$, respectively,} are given by
  \begin{align*}
&\K^{rls}({ y};\E)=\frac{\left\| \bmatrix{({\bf r}^{\T}\otimes \widetilde{K})\bm{\Phi}_{\SY_m} \mathfrak{D}^{-1}_{\SY_m} & (K\otimes { y}^{\T}+{\bf r}^{\T}\otimes \widetilde{S}^{-1})\bm{\Phi}_{\mathcal{T}_{nm}} \mathfrak{D}^{-1}_{\E} & { y}^{\T}\otimes \widetilde{S}^{-1} & -\widetilde{K} & -\widetilde{S}^{-1}}\right\|_{2}}{\left\|{ y}\right\|_2 / \left\|\bmatrix{\widehat{\M}& \mathbf{d}}\right\|_F},\\
	&\hspace{3cm}\Ms^{rls}({ y}; \, \E)= \frac{ \|\mathcal{N}_{ y}\|_{\infty}}{\|{ y}\|_{\infty}},\quad
			\C^{rls}({ y};\E)= \left\| \mathbf{D}^{\dagger}_{{ y}} \mathcal{N}_{ y}\right\|_{\infty},
		\end{align*}
  where $\mathcal{N}_{ y}=|({\bf r}^{\T}\otimes \widetilde{K})\bm{\Phi}_{\mathcal{S}_{m}}|\vec_{\mathcal{S}}(|A|)+\, |((K\otimes { y}^{\T})+ \,({\bf r}^{\T}\otimes \widetilde{S}^{-1}))\bm{\Phi}_{\mathcal{T}_{nm}}| \vec_{\mathcal{T}}(|Q^{\T}|)+ |\widetilde{S}^{-1}||D| |{ y}|+ |K||f|,$ $\widetilde{K}=-\widetilde{S}^{-1}Q^{\T}W,$ and  $\widetilde{S}= -(\lambda I_n+Q^{\T}WQ).$
 \end{theorem}
 
 \vspace{1mm}
 \begin{proof}
 Let $\mathbf{L}= \bmatrix{{\bf 0}&I_n}\in \R^{n\times (m+n)},$ $A=W^{-1}, B=Q^{\T}, D=-\lambda I_n,$ $x={\bf r},$ and $g={\bf 0.}$ Then from Theorem \ref{Th31}, we have 
\begin{align*}
&\mathbf{L}\M^{-1}\bmatrix{\mathcal{R}\bm{\Phi}_{\E}\mathfrak{D}_{\E}^{-1} & -I_{m
            +n}}\\
            &=\bmatrix{\widetilde{K} & \widetilde{S}^{-1}}\bmatrix{\mathcal{R} &-I_{m+n}}\bmatrix{\bm{\Phi}_{\E}\mathfrak{D}^{-1}_{\E}& \mathbf{0}\\\mathbf{0} & I_{m+n}}\\ 
            &=\bmatrix{({\bf r}^{\T}\otimes K)\bm{\Phi}_{\SY_m} \mathfrak{D}^{-1}_{\SY_m} & (\widetilde{K}\otimes { y}^{\T}+{\bf r}^{\T}\otimes \widetilde{S}^{-1})\bm{\Phi}_{\mathcal{T}_{nm}} \mathfrak{D}^{-1}_{\mathcal{T}_{nm}} & { y}^{\T}\otimes \widetilde{S}^{-1} & -\widetilde{K} & -\widetilde{S}^{-1}}.
  \end{align*}
  Hence, the expression for $\K^{rls}({ y}; \,\E)$ is obtained from Theorem \ref{Th31}. The rest of the proof follows in a similar manner.  
  \end{proof}

 \vspace{1mm}
 Since, in most cases of the \textit{WTRLS} problem, the weighted matrix $W$ and regularization matrix $D= -\lambda I_n$ has no perturbation, we consider $\D A={\bf 0}$ and $\D D= {\bf 0}.$  Moreover, as $g={\bf 0},$ we assume $\D g= {\bf 0}.$ Then, perturbation expansion  in Lemma \ref{LM22} can be reformulated as 
\begin{align}
	\nonumber	\bmatrix{\D x\\ \D y}&= -\M^{-1}\bmatrix{I_m \otimes y^{\T} & -I_m\\ x^{\T}\otimes I_n & {\bf 0}} \bmatrix{\vec(\D B)\\ \D f}\\
 &= -\bmatrix{\mathcal{R}_{rls}& -\bmatrix{M\\ K}}\bmatrix{\vec(\D B)\\ \D f},
\end{align}
where $\mathcal{R}_{rls}=\bmatrix{M\otimes y^{\T}+x^{\T}\otimes N\\ K\otimes y^{\T}+x^{\T}\otimes S^{-1} }.$
Now, applying a similar method to Subsection \ref{subsec32}, we obtain the following expressions for the \textit{NCN}, \textit{MCN}, and \textit{CCN} for $\mathbf{L}[x^{\T},\, y^{\T}]^{\T}$ when $B=C$ and $g={\bf 0}.$

 \vspace{1mm}
\begin{theorem}\label{prop51}
	Let $\D B\in \mathcal{T}^{n\times m}$ and  with the above  {notations},  {structured} \textit{NCN}, \textit{MCN}, and \textit{CCN} for the \textit{GSPP} \eqref{Eq22}, respectively, are given by 
	   \begin{eqnarray*}
          \widehat{\K}(\mathbf{L}[x^{\T},\, y^{\T}]^{\T})&:=& \lim_{\eta\rightarrow 0}\sup\Bigg\{\frac{\|{\mathbf{L}}[\Delta x^{\T},\, \Delta y^{\T}]^{\T} \|_2}{\eta\|\mathbf{L}[x^{\T},\, y^{\T}]^{\T}\|_2}: \left\|\bmatrix{\Delta B &\D f}\right\|_F\leq \eta \left\|\bmatrix{B& f}\right\|_F \Bigg\}\\ &=&\frac{\left\|\mathbf{L}\bmatrix{\mathcal{R}_{rls}\bm{\Phi}_{\mathcal{T}_{nm}}\mathfrak{D}_{\mathcal{T}_{nm}}^{-1} & -\bmatrix{M\\ K}}\right\|_2 \left\|\bmatrix{B& f}\right\|_F}{\|\mathbf{L}[x^{\T},\, y^{\T}]^{\T}\|_2},\\
         \widehat{\Ms}(\mathbf{L}[x^{\T},\, y^{\T}]^{\T})&:=& \lim_{\eta\rightarrow 0}\sup\Bigg\{\frac{\|{\mathbf{L}}[\Delta x^{\T},\, \Delta y^{\T}]^{\T} \|_{\infty}}{\eta\|\mathbf{L}[x^{\T},\, y^{\T}]^{\T}\|_{\infty}}: \left\vert\bmatrix{\Delta B &\D f}\right\vert \, \leq \eta \left\vert \bmatrix{ B & f}\right\vert \Bigg\}\\
&=& \frac{\left\||\mathbf{L}\mathcal{R}_{rls}\bm{\Phi}_{\mathcal{T}_{nm}}| \vec_{\mathcal{T}}(|B|)+\left|\mathbf{L}\bmatrix{M\\ K}\right||f|\right\|_{\infty}}{\|\mathbf{L}[x^{\T},\, y^{\T}]^{\T}\|_{\infty}},\\
         \widehat{\C}(\mathbf{L}[x^{\T},\, y^{\T}]^{\T})&:=&\lim_{\eta\rightarrow 0}\sup\Bigg\{\frac{1}{\eta}\left\|\frac{\mathbf{L}[\Delta x^{\T},\, \Delta y^{\T}]^{\T}}{\mathbf{L}[x^{\T},\, y^{\T}]^{\T}}\right\|_{\infty}:  \left\vert\bmatrix{\Delta B &\D f}\right\vert \, \leq \eta \left\vert \bmatrix{B & f}\right\vert\Bigg\}\\
&=&\left\|\mathbf{D}^{\dagger}_{\mathbf{L}[x^{\T},\, y^{\T}]^{\T}}|\mathbf{L}\mathcal{R}_{rls}\bm{\Phi}_{\mathcal{T}_{nm}}| \vec_{\mathcal{T}}(|B|) + \mathbf{D}^{\dagger}_{\mathbf{L}[x^{\T},\, y^{\T}]^{\T}} \left|\mathbf{L}\bmatrix{M\\ K}\right||f|\right\|_{\infty}.
     \end{eqnarray*}
\end{theorem}
\begin{proof}
  For applying  Lemma \ref{lm21}, we define 
	\begin{align*}
		&\bm{\widehat{\zeta}}:\, \R^{m+n-1}\times \R^m \mapsto \R^{m+n} \quad \mbox{by} \\ &\bm{\widehat{\zeta}}\left([\mathfrak{D}_{\mathcal{T}_{nm}}\vec_{\mathcal{T}}(B)^{\T},\,  f^{\T}]^{\T}\right)=\textbf{L}\bmatrix{x\\ y}=\textbf{L}\M^{-1}\bmatrix{f\\ {\bf 0}}.
	\end{align*}
	  Then, the map $\bm{\widehat{\zeta}}$ is continuously $Fr\acute{e}chet$ differentiable at  $[\mathfrak{D}_{\mathcal{T}_{nm}}\vec_{\mathcal{T}}(B)^{\T},\,  f^{\T}]^{\T}$ with $$
\d\bm{\widehat{\zeta}}\left([\mathfrak{D}_{\mathcal{T}_{nm}}\vec_{\mathcal{T}}(B)^{\T},\,  f^{\T}]^{\T}\right)=-\mathbf{L}\bmatrix{\mathcal{R}_{rls}\bm{\Phi}_{\mathcal{T}_{nm}}\mathfrak{D}_{\mathcal{T}_{nm}}^{-1}& -\bmatrix{M\\ K}}.$$ The rest of the proof follows similarly to Theorem \ref{Th32}.
\end{proof}


Using the above result, we can derive the following structured \textit{CNs} for the problem \eqref{eq:WTRLS}, when the weighted matrix and regularization matrix have no perturbation.

 \vspace{1.5mm}
\begin{corollary}
	 The structured \textit{NCN}, MCM, and \textit{CCN}  {for the  solution ${ y}$ of the \textit{WTRLS} problem \eqref{eq:WTRLS}}, respectively, are given by  
  \begin{eqnarray*}
          \widehat{\K}^{rls}( { y}) &=& \frac{\left\|\bmatrix{(\widetilde{K}\otimes { y}^{\T}+ {\bf r}^{\T}\otimes \widetilde{S}^{-1})\bm{\Phi}_{\mathcal{T}_{nm}}\mathfrak{D}_{\mathcal{T}_{nm}}^{-1} & -\widetilde{K}}\right\|_2 \left\|\bmatrix{Q& f}\right\|_F}{\|{ y}\|_2},\\
          \widehat{\Ms}^{rls}({ y}) &=& \frac{\left\||(\widetilde{K}\otimes { y}^{\T}+ {\bf r}^{\T}\otimes \widetilde{S}^{-1}) \bm{\Phi}_{\mathcal{T}_{nm}}| \vec_{\mathcal{T}}(|Q^{\T}|)+|\widetilde{K}||f|\right\|_{\infty}}{\|{ y}\|_{\infty}},\\
         \widehat{\C}^{rls}({ y}) &=& \|\mathbf{D}^{\dagger}_{{ y}}|(\widetilde{K}\otimes { {y}}^{\T}+ {\bf r}^{\T}\otimes \widetilde{S}^{-1}) \bm{\Phi}_{\mathcal{T}_{nm}}| \vec_{\mathcal{T}}(|Q^{\T}|)  + \mathbf{D}^{\dagger}_{ {y}} |\widetilde{K}||f|\|_{\infty},
     \end{eqnarray*}
      where $\widetilde{K}=\widetilde{S}^{-1}Q^{\T}W$ and $\widetilde{S}=-(\lambda I_n+Q^{\T}WQ).$
\end{corollary}
\begin{proof}
     Substituting $\mathbf{L}=\bmatrix{{\bf 0}& I_n}\in \R^{n\times (m+n)},$ $B=Q^{\T}, A=W^{-1},\, D=-\lambda I_n$ and $x=W(f-Q{ y})$ in Theorem \ref{prop51},  the proof follows. 
     \end{proof}
     
 \vspace{1.5mm}
\begin{remark}
  We consider the Tikhonov regularization problem ${ \displaystyle{\min_{w\in \R^n}\left\{\|B^{\T}w-f\|^2_2+\lambda \|Rw\|_2^2\right\}}},$ where $R$ is the regularization matrix and  $\lambda>0$ regularization parameter. Then, substituting $\mathbf{L}=\bmatrix{{\bf 0}& I_n}\in \R^{n\times (m+n)},$ $ A=I_m, D=-\lambda R^{\T}R,$ $x=(f-B^{\T}w)$ and $y=w,$ in Theorem \ref{prop51}, we can recover the structured \textit{NCN}, \textit{MCN}, and \textit{CCN} formulae discussed in \cite{THIKONOV} for Toeplitz structure.
\end{remark}
\section{Numerical experiments}\label{sec7}
In order to check the reliability of the proposed structured \textit{CNs}, we perform  {several} numerical experiments in this section. All numerical tests are conducted on MATLAB R2023b on an Intel(R) Core(TM) $i7$-$10700$ $CPU$, $ 2.90GHz,$ $ 16$ $GB$ memory with machine precision $\mu=2.2\times 10^{-16}.$   

{ We construct the perturbations to the input data as follows: 
\begin{eqnarray}\label{per1}
   && \D A= 10^{-q} \cdot \D A_1\odot A, \quad \D B= 10^{-q} \cdot \D B_1\odot B,\quad \D C= 10^{-q} \cdot \D C_1\odot C,\\ \label{per2}
   &&\D D= 10^{-q} \cdot \D D_1\odot D,
		 \,\D f= 10^{-q}\cdot \D f_1\odot f, \, \D g= 10^{-q} \cdot \D g_1\odot g,
\end{eqnarray}
where $\D A_1\in \R^{m\times m},\D B_1, \D C_1\in \R^{n\times m}$ and 	$\D D_1\in \R^{n\times n}$ are the random matrices, preserving the structures of original matrices. Here, $\odot$ represents the entrywise multiplication of two matrices of the same dimensions. 
  Suppose that $[x^{\T},\, y^{\T}]^{\T}$ and $[\tilde{x}^{\T},\, \tilde{y}^{\T}]^{\T}$ are the unique solutions of the original \textit{GSPP}  and the perturbed \textit{GSPP}, respectively.  To estimate an upper bound for the forward error in the solution,  the normwise, mixed, and componentwise relative errors in $\mathbf{L}[x^{\T},y^{\T}]^{\T},$ respectively, are defined  by: }
	\begin{align*}
		&relk=\frac{\|\mathbf{L}[\tilde{x}^{\T},\, \tilde{y}^{\T}]^{\T}-\mathbf{L}[x^{\T}, \, y^{\T}]^{\T}\|_2}{\|\mathbf{L}[x^{\T},\, y^{\T}]^{\T}\|_2}, \quad  relm=\frac{\|\mathbf{L}[\tilde{x}^{\T},\, \tilde{y}^{\T}]^{\T}-\mathbf{L}[x^{\T}, \, y^{\T}]^{\T}\|_{\infty}}{\|\mathbf{L}[x^{\T},\, y^{\T}]^{\T}\|_{\infty}}, \\
		& \quad \quad relc=\left\|\frac{\mathbf{L}[\tilde{x}^{\T},\, \tilde{y}^{\T}]^{\T}-\mathbf{L}[x^{\T},\, y^{\T}]^{\T}}{\mathbf{L}[x^{\T},\, y^{\T}]^{\T}}\right\|_{\infty}.
	\end{align*} 
  {
  The following quantities $$\eta_1 \cdot \K(\mathbf{L}[x^{\T},\, y^{\T}]^{\T}), \quad \eta_2 \cdot \Ms(\mathbf{L}[x^{\T},\, y^{\T}]^{\T}), \quad \eta_2\cdot \C(\mathbf{L}[x^{\T},\, y^{\T}]^{\T}) \quad \text{and}$$
	$$\eta_1 \cdot \K(\mathbf{L}[x^{\T},\, y^{\T}]^{\T};\mathbb{S}), \quad \eta_2 \cdot \Ms(\mathbf{L}[x^{\T},\, y^{\T}]^{\T};\mathbb{S}), \quad \eta_2\cdot \C(\mathbf{L}[x^{\T},\, y^{\T}]^{\T};\mathbb{S}),$$ where $\mathbb{S}=\{\E, \mathcal{L}\},$ are the  estimated upper bounds of \textit{relk}, \textit{relm}, and \textit{relc} obtained by the \textit{CNs} in unstructured and structured cases,  respectively. Here, the quantities $\eta_1$ and $\eta_2$ are defined as \cite{MTLS2022}:
 \begin{eqnarray*}
	    \eta_1=\left\{\begin{array}{cc}
	      \frac{\left\|\bmatrix{\D H& \D \mathbf{d}}\right\|_F}{\left\|\bmatrix{H& \mathbf{d}}\right\|_F},& \text{when}~ \mathbb{S}=\E,\\
	          \frac{\left\|\bmatrix{\D \M& \D \mathbf{d}}\right\|_F}{\left\|\bmatrix{\M& \mathbf{d}}\right\|_F},& \text{when}~ \mathbb{S}=\L,
	    \end{array}\right.
	\end{eqnarray*}
 and $\eta_2=\min\{\eta: \left|\bmatrix{\D \M& \D \mathbf{d}}\right|\leq \eta\left|\bmatrix{\M& \mathbf{d}}\right|\}$.} 
 We choose the matrix $\mathbf{L}$ as    $I_{m+n},$ $\bmatrix{I_m & {\bf 0}},$ and $\bmatrix{{\bf 0} & I_n},$ so that the \textit{CNs} for $[x^{\T},\, y^{\T}]^{\T},$ $x$ and $y,$ respectively, are obtained.

 \vone
\begin{example}\label{ex1}
   Consider the \textit{GSPP} (\ref{Eq22}), where the data matrices $A,B,D,f$ and $g$ are given as follows: $$A=\bmatrix{\epsilon_1&\epsilon_1&-0.01&10&10&-30&30&10&30\\
   \epsilon_1 &\epsilon_1& \epsilon_1&-0.01& 10&10&-30&30&10\\
  -0.01 &\epsilon_1 &\epsilon_1& \epsilon_1&-0.01& 10&10&-30&30\\
10  & -0.01 &\epsilon_1 &\epsilon_1& \epsilon_1&-0.01& 10&10&-30\\
10&10  & -0.01 &\epsilon_1 &\epsilon_1& \epsilon_1&-0.01& 10&10\\
-30 & 10 & 10  & -0.01 &\epsilon_1 &\epsilon_1& \epsilon_1&-0.01& 10\\
30&-30 & 10 & 10  & -0.01 &\epsilon_1 &\epsilon_1& \epsilon_1&-0.01\\
10&30&-30 & 10 & 10  & -0.01 &\epsilon_1 &\epsilon_1& \epsilon_1\\
30 &10&30&-30 & 10 & 10  & -0.01 &\epsilon_1 &\epsilon_1&}\in \SY_9,$$

$$B=\bmatrix{8&1&2 & 3&4 & 5& 6&7 &8\\
-0.02&8&1&2 & 3&4 & 5& 6&7\\
-0.03&-0.002&8&1&2 & 3&4 & 5& 6\\
-0.04&-0.03&-0.002&8&1&2 & 3&4 & 5\\
-0.05&-0.04&-0.03&-0.002&8&1&2 & 3&4 }\in \mathcal{T}^{4\times 9},$$
$$ D=0.05*std(B^{\T}).*randn(n,n),$$ $f=[1,\ldots,1,m-1,1]^{\T}\in \R^9,$ and $g=randn(n,1)\in \R^4,$
where  {$randn(m,n)$ denotes the $m\times n$ random matrix generated by the MATLAB command $randn$ and} $std(B^{\T})$ denotes the standard deviation of $B^{\T}.$ Here, $m=9$ and $n=4.$  {For the perturbations to the input data 
 constructed as in \eqref{per1}-\eqref{per2} with $q=7,$ $\D B_1\in \mathcal{T}^{n\times m}$ is a randomly generated Toeplitz matrix, $\D A_1=\frac{1}{2}(\widehat{A}+{ \widehat{A}}^{\T}),$  and $ \widehat{A} \in \R^{m\times m},\D D_1\in \R^{n\times n}$  are random matrices.} 
   \begin{table}[ht!]
			\centering
		\caption{Comparison of unstructured and structured \textit{NCN}, \textit{MCN}, and \textit{CCN} with their corresponding relative errors when  { $\mathbf{L}=I_{13}$}  for Example \ref{ex1}.}
			\label{tab1}
   \resizebox{15cm}{!}{
				\begin{tabular}{cccccccccc}
					\toprule
				 $\epsilon_1$& \large{$relk$}  &$\K([x^{\T},\, y^{\T}]^{\T})$	& $\K([x^{\T},\, y^{\T}]^{\T};\,\E)$ & \large{$relm$}  &$\Ms([x^{\T},\, y^{\T}]^{\T})$& $\Ms([x^{\T},\, y^{\T}]^{\T};\,\E)$ & \large{$relc$} &$\C([x^{\T},\, y^{\T}]^{\T})$ &  $\C([x^{\T},\, y^{\T}]^{\T};\,\E)$ \\
  \midrule 
   10 &7.2266e-07 &2.6621e+03&	2.5899e+03&	6.7684e-07&	1.3407e+02&	7.4069e+01&	7.5571e-06&	1.9545e+03&	9.1963e+02\\[1ex]
   $10^0$ &1.1562e-06 &2.5310e+03&	2.4517e+03&	8.2825e-07&	1.2620e+02&	6.1160e+01&	7.5375e-06&	1.1485e+03&	5.5659e+02\\[1ex]
     $10^{-1}$ &1.7604e-06 &2.9961e+03&	2.9210e+03&	1.7656e-06&	1.4518e+02&	7.4370e+01&	1.1800e-05&	1.1229e+03&	5.0019e+02\\	[1ex]
    $10^{-2}$ &5.2202e-07 &1.8120e+03	&1.7515e+03&	5.1969e-07&	1.1842e+02&	5.5675e+01&	5.1819e-06&	1.4140e+03&	8.8193e+02\\[1ex]
     $10^{-2}$ & 6.1932e-07&	3.1643e+03&	3.0814e+03&	5.5990e-07&	1.6201e+02&	8.4378e+01&	3.5558e-06&	1.2309e+03	&7.3506e+02\\[1ex]
   $10^{-3}$ & 2.4807e-06&	1.6465e+03&	1.5744e+03&	2.0678e-06&	1.1802e+02&	7.9648e+01&	1.3033e-05&	1.2165e+03&	7.7459e+02\\[1ex]
  $10^{-4}$ & 1.2229e-06&	3.0661e+03&	2.9578e+03&	1.3129e-06&	1.5980e+02&	9.0334e+01&	6.3654e-06&	1.2789e+03&	9.3070e+02\\[1ex]
     \bottomrule
    \end{tabular}
    }
    \end{table}
  \begin{table}[ht!]
			\centering
		\caption{Comparison of unstructured and structured \textit{NCN}, \textit{MCN}, and \textit{CCN}  with their corresponding relative errors when $\mathbf{L}=\bmatrix{I_9 & {\bf 0}}$ for Example \ref{ex1}.}
			\label{tab2}
   \resizebox{15cm}{!}{
				\begin{tabular}{cccccccccc}
					\toprule
				 $\epsilon_1$& \large{$relk$}  &$\K(x)$	& $\K(x;\,\E)$ & \large{$relm$} &$\Ms(x)$& $\Ms(x;\,\E)$ & \large{$relc$} & $\C(x)$ &  $\C(x;\,\E)$ \\
  \midrule 
   10 &7.9185e-07&	2.9377e+03&	2.8591e+03&	5.8903e-07&	1.1477e+02&	6.7631e+01&	7.5571e-06&	4.9310e+02&	2.6889e+02\\[1ex]
   $10^0$ &9.0322e-07&	2.0236e+03&	1.9605e+03&	9.7497e-07&	1.3501e+02&	6.2699e+01&	2.7439e-06	&3.3927e+02	&1.5957e+02\\[1ex]
     $10^{-1}$ &1.9559e-06	&3.4895e+03&	3.4010e+03&	1.8283e-06&	1.4073e+02&	7.9523e+01&	1.1800e-05&	1.1229e+03&	5.0019e+02\\	[1ex]
    $10^{-2}$ &3.6024e-07&	1.3749e+03&	1.3294e+03&	4.2107e-07&	9.5080e+01&	4.5826e+01&	5.9700e-07&	1.4140e+03	&8.8193e+02\\[1ex]
     $10^{-2}$ &6.4001e-07&	2.9172e+03&	2.8412e+03	&7.1460e-07&	1.5484e+02&	8.2478e+01&	2.9604e-06&	1.1599e+03&	6.2412e+02\\[1ex]
   $10^{-3}$ & 1.4527e-06&	7.9932e+02&	7.6489e+02&	2.1781e-06&	9.3409e+01&	6.0897e+01&	6.8343e-06&	2.9309e+02	&1.9108e+02\\[1ex]
  $10^{-4}$ & 8.9097e-07&	3.1005e+03&	2.9895e+03&	1.0788e-06&	1.3093e+02&	6.4355e+01&	4.5008e-06&	1.2789e+03	&9.3070e+02\\[2ex]
   \bottomrule
    \end{tabular}
    }
    \end{table}
     \begin{table}[ht!]
			\centering
		\caption{Comparison of unstructured and structured \textit{NCN}, \textit{MCN}, and \textit{CCN} with their corresponding relative errors when  $\mathbf{L}=\bmatrix{ {\bf 0} &  I_4}$ for Example \ref{ex1}.}
			\label{tab3}
   \resizebox{15cm}{!}{
				\begin{tabular}{cccccccccc}
					\toprule
				 $\epsilon_1$& \large{$relk$}  &$\K(y)$	& $\K(y;\,\E)$ & \large{$relm$} &$\Ms(y)$& $\Ms(y;\,\E)$ & \large{$relc$} & $\C(y)$ &  $\C(y;\,\E)$ \\
  \midrule 
   10 &7.2081e-07&	2.6548e+03&	4.5585e+02&	6.7684e-07&	1.3407e+02&	7.4069e+01&	5.3298e-06&	1.9545e+03&	9.1963e+02\\[1ex]
   $10^0$ &1.1662e-06&	2.5513e+03&	4.1391e+02&	8.2825e-07&	1.2620e+02&	6.1160e+01&	7.5375e-06&	1.1485e+03	&5.5659e+02\\[1ex]
     $10^{-1}$ &1.7531e-06&	2.9773e+03&	6.4241e+02&	1.7656e-06&	1.4518e+02&	7.4370e+01&	1.9676e-06&	1.9721e+02&	1.0083e+02\\	[1ex]
    $10^{-2}$ &5.3000e-07&	1.8351e+03&	3.2251e+02&	5.1969e-07&	1.1842e+02&	5.5675e+01&	5.1819e-06&	8.0176e+02	&4.6153e+02\\[1ex]
     $10^{-2}$ &6.1861e-07&	3.1725e+03&	5.2490e+02&	5.5990e-07&	1.6201e+02&	8.4378e+01&	3.5558e-06&	1.2309e+03	&7.3506e+02\\[1ex]
   $10^{-3}$ & 2.5575e-06&	1.7068e+03&	2.3662e+02&	2.0678e-06&	1.1802e+02&	7.9648e+01&	1.3033e-05&	1.2165e+03&	7.7459e+02\\[1ex]
  $10^{-4}$ &1.2378e-06&	3.0646e+03&	6.8237e+02&	1.3129e-06&	1.5980e+02&	9.0334e+01&	6.3654e-06&	2.9429e+02	&1.8874e+02\\[1ex]
   \bottomrule

    \end{tabular}
    }
    \end{table}
    
  The numerical results for different choices of $\epsilon_1$ are reported in Tables \ref{tab1}--\ref{tab3} using the formulae presented in Theorems \ref{Th31} and \ref{Th32}.  { The sizes of $\eta_1$ and $\eta_2$ are about $10^{-8}$ and $10^{-7},$ respectively, for all cases.}  It can be observed that structured \textit{CNs}, in all cases, are much smaller (almost one order less) than their unstructured counterparts. Moreover, the estimated upper bounds proposed by the \textit{CNs} for $relk,$ $relm$, and  $relc$ are sharper in the structured case than in the unstructured ones. Notably,  structured \textit{MCN} and \textit{CCN} give sharper bounds than \textit{NCN} on the relative error as they are of the same order or one order larger than $relm$ and $relc,$ respectively. This indicates that it is more preferable to  adopt $\Ms(\mathbf{L}[x^{\T},\, y^{\T}]^{\T};\E)$ and $\C(\mathbf{L}[x^{\T},\, y^{\T}]^{\T};\E)$ to measure the true conditioning of the \textit{GSPP} \eqref{Eq22}.
\end{example}
\vspace{2mm}
{ \begin{example}\label{exam2}
In this example, we consider the \textit{GSPP} \eqref{Eq22} arising from the \textit{WTRLS} problem \cite{Benziimage2006}. Here $m=n$ and the Toeplitz matrix $B$ is given as follows:
\vspace{-2mm}
\begin{eqnarray*}
    B=[b_{ij}]\in \mathcal{T}^{n\times n}~~\quad\text{with}~~ b_{ij}=\frac{1}{\sqrt{2\pi\sigma}}e^{-\frac{(i-j)^2}{2\sigma^2}},
\end{eqnarray*}
 $A\in \R^{n\times n}$ is set to be a positive diagonal random matrix, and $D=-\nu I_n$ ($\nu>0$). The right hand side vector is taken as $\mathbf{d}=randn(2n,1)\in \R^{2n}.$

We select $\sigma=2$ and $\nu=0.001$ as in $[10]$. We set $q=8$ and construct perturbation matrices as in Example $6.1$. In all cases, we observed $\eta_1\approx \mathcal{O}(10^{-9})$ and $\eta_2\approx \mathcal{O}(10^{-8})$. The numerical results for structured and unstructured \textit{NCN}, \textit{MCN}, and \textit{CCN}, and the exact relative errors are reported in Tables \ref{exam2:tab1}-\ref{exam2:tab3} for different values of $n.$  
 \begin{table}[ht!]
			\centering
		\caption{   Comparison of unstructured and structured \textit{NCN}, \textit{MCN}, and \textit{CCN} with their corresponding relative errors  when $\mathbf{L}=I_{2n}$ for Example \ref{exam2}.}
			\label{exam2:tab1}
   \resizebox{15cm}{!}{
		  		\begin{tabular}{cccccccccc}
					\toprule
				 $n= m$&\large{$relk$}  &$\K([x^{\T},\, y^{\T}]^{\T})$	& $\K([x^{\T},\, y^{\T}]^{\T};\,\E)$ & \large{$relm$} &$\Ms([x^{\T},\, y^{\T}]^{\T})$& $\Ms([x^{\T},\, y^{\T}]^{\T};\,\E)$ & \large{$relc$} &$\C([x^{\T},\, y^{\T}]^{\T})$ &  $\C([x^{\T},\, y^{\T}]^{\T};\,\E)$ \\
  \midrule 
 $50$&4.1808e-07&	2.8177e+04&	2.4798e+04&	4.6643e-07&	1.4438e+03	&5.3588e+02&	3.3769e-05&	5.7501e+04	&2.0978e+04\\[1ex]
   $100$ &2.4188e-07&	4.8911e+03&	4.6982e+03&	2.5583e-07&	1.4305e+02&	4.1253e+01	&1.4497e-05&	1.1440e+04&	2.4661e+03\\[1ex]
     $150$ &5.3749e-07&	1.9378e+04&	1.7985e+04&	6.1184e-07&	5.5108e+02&	1.3986e+02	&2.4998e-04	&3.6099e+05	&8.1050e+04\\	[1ex]
    $200$ &7.5206e-07&	3.2373e+04&	9.4706e+03&	8.8297e-07&	1.0386e+03&	4.5302e+02&	9.7741e-05&	2.0373e+05&	4.4730e+04\\[1ex]
     \bottomrule
    \end{tabular}
    }
    \end{table}
     \begin{table}[ht!]
			\centering
		\caption{   Comparison of unstructured and structured \textit{NCN}, \textit{MCN}, and \textit{CCN} with their corresponding relative errors  when $\mathbf{L}=\bmatrix{I_{n}& \bf 0}$ for Example \ref{exam2}.}
			\label{exam2:tab2}
   \resizebox{15cm}{!}{
		  		\begin{tabular}{cccccccccc}
					\toprule
				 $n=m$&\large{$relk$}  &$\K([x^{\T},\, y^{\T}]^{\T})$	& $\K([x^{\T},\, y^{\T}]^{\T};\,\E)$ & \large{$relm$} &$\Ms([x^{\T},\, y^{\T}]^{\T})$& $\Ms([x^{\T},\, y^{\T}]^{\T};\,\E)$ & \large{$relc$} &$\C([x^{\T},\, y^{\T}]^{\T})$ &  $\C([x^{\T},\, y^{\T}]^{\T};\,\E)$ \\
  \midrule 
 $50$&3.8496e-07	&2.7536e+04&	2.4281e+04	&4.1735e-07&	1.2042e+03&	4.6611e+02&	3.2289e-06&	1.0158e+04	&3.1020e+03\\[1ex]
   $100$ &3.0293e-07&	7.0491e+03&	6.7372e+03&	3.7151e-07&	2.7486e+02&	7.2328e+01&	6.2591e-06&	6.5226e+03	&1.2953e+03\\[1ex]
     $150$ &7.2376e-07&	2.7944e+04&	2.5692e+04	&8.4422e-07	&7.5098e+02&	2.0082e+02&	6.3056e-05&	3.6099e+05	&8.1050E+04\\	[1ex]
    $200$ &8.0141e-07&	3.6034e+04&	3.2041e+04&	7.4664e-07&1.0283e+03&	4.1526e+02&	9.7741e-05&	2.0067e+05&	4.4730e+04\\[1ex]
     \bottomrule
    \end{tabular}
    }
    \end{table}
    \begin{table}[ht!]
			\centering
		\caption{   Comparison of unstructured and structured \textit{NCN}, \textit{MCN}, and \textit{CCN} with their corresponding relative errors  when $\mathbf{L}=\bmatrix{ \bf 0& I_{n}}$ for Example \ref{exam2}.}
			\label{exam2:tab3}
   \resizebox{15cm}{!}{
		  		\begin{tabular}{cccccccccc}
					\toprule
				 $n=m$&\large{$relk$}  &$\K([x^{\T},\, y^{\T}]^{\T})$	& $\K([x^{\T},\, y^{\T}]^{\T};\,\E)$ & \large{$relm$} &$\Ms([x^{\T},\, y^{\T}]^{\T})$& $\Ms([x^{\T},\, y^{\T}]^{\T};\,\E)$ & \large{$relc$} &$\C([x^{\T},\, y^{\T}]^{\T})$ &  $\C([x^{\T},\, y^{\T}]^{\T};\,\E)$ \\
  \midrule 
 $50$&4.2087e-07&	2.8235e+04	&7.2137e+03&	4.6643e-07	&1.4438e+03	&5.3588e+02&	3.3769e-05&	5.7501e+04	&2.0978e+04\\[1ex]
   $100$ &2.3999e-07&	4.8471e+03	&1.1173e+03&	2.5583e-07	&1.4305e+02&	4.1253e+01&	1.4497e-05	&1.1440e+04&	2.4661e+03\\[1ex]
     $150$ &5.2664e-07&	1.8878e+04	&5.6962e+03	&6.1184e-07&	5.5108e+02	&1.3986e+02	&2.4998e-04&	9.0978e+04	&3.1977e+04\\	[1ex]
    $200$ &7.4779e-07&	3.2089e+04&	9.2643e+03	&8.8297e-07	&1.0386e+03	&4.5302e+02&5.4363e-05&	2.0373e+05	&3.4820e+04\\[1ex]
     \bottomrule
    \end{tabular}
    }
    \end{table}
  We use Theorem \ref{Th32} and Remark \ref{re:Th32} to compute the structured \textit{CNs} and Theorem \ref{Th31} to compute unstructured \textit{CNs}. The results presented in Tables \ref{exam2:tab1}-\ref{exam2:tab3} reveal that the structured  \textit{NCN}, \textit{MCM}, and \textit{CCN} are much smaller than the unstructured ones for all values $n$. Specifically, for large matrices (with dimensions of $\mathcal{M}$ taken up to $400$), the structured \textit{CNs} are approximately an order of magnitude smaller than unstructured ones, showcasing the superiority of proposed structured \textit{CNs}.
\end{example}
\vone
\begin{example}\label{exam3}
    In this example, we consider the \textit{GSPP} arising from the discretization of the following Stokes equation by upwind scheme \cite{ZZStokes}:
    \begin{align}\label{stokes}
        \left\{\begin{array}{cc}
         -\mu \Delta {\bf u}+\nabla p=\tilde{f},    &\text{in} ~{\bm \Omega} , \\
            \nabla\cdot {\bf u}=\tilde{g} & \text{in} ~{\bm \Omega},\\
            {\bf u}={\bf 0}, &\text{on}~ \partial {\bm \Omega},\\
            \int_{{\bm \Omega}}p(x)dx=0,&
        \end{array}\right.
    \end{align}
    where ${\bm \Omega}=(0,1)\times (0,1)\in \R^2,$ $\partial {\bm \Omega}$ is the boundary of ${\bm \Omega},$ $\mu$ is the viscosity parameter, $\Delta$ is the Laplace operator, $\nabla$ represents the gradient, $\nabla\cdot$ is the divergence, ${\bf u} $ is the velocity vector, and $p$ is the scalar function representing the pressure. By discretizing \eqref{stokes}, we obtain the \textit{GSPP} \eqref{eq11} with
    $A=\bmatrix{I_r\otimes T+T\otimes I_r& {\bf 0}\\{\bf 0} & I_r\otimes T+T\otimes I_r}\in \R^{2r^2\times 2r^2},$ $B^{\T}=\bmatrix{I_r\otimes G\\ G\otimes I}\in \R^{2r^2\times r^2},$ $C=-B,$ $D={\bf 0},$ where 
    \vspace{-1mm}
    \begin{equation*}
        T=\frac{\mu}{h^2}~\mathrm{tridiag}(-1,2,-1)\in \R^{r\times r} ~ \text{and}~G=\frac{1}{h}~\mathrm{tridiag}(-1,1,0)\in \R^{r\times r}.
\end{equation*}
    Here, $\mathrm{tridiag}(a,b,c)$ denotes the tridiagonal matrix with diagonal entries $b,$ sub-diagonal entries $a$ and super-diagonal entries $c.$ Note that,  for this test problem $\mu=0.1,$ $m=2r^2$ and  $n=r^2,$ and we choose  ${\bf d}=[f^{\T},g^{\T}]^{\T}$ so that the exact solution is $z=[1,1,\ldots, 1]^{\T}\in \R^{m+n}.$ To avoid making $A$ too sparse, we add $X=0.5(X_1+X_1^{\T})$  to $A,$ where $X_1=sprandn(m,n,0.1).$ Here, $sprandn(m,n,0.1)$ denotes $ m\times n$ sparse random matrix with a density of 0.1.
    \begin{table}[ht!]
			\centering
		\caption{   Comparison of unstructured and structured \textit{NCN}, \textit{MCN}, and \textit{CCN} with their corresponding relative errors  when $\mathbf{L}=I_{m+n}$ for Example \ref{exam3}.}
			\label{exam3:tab1}
   \resizebox{15cm}{!}{
		  		\begin{tabular}{cccccccccc}
					\toprule
				 $r$&\large{$relk$}  &$\widetilde{\K}(z)$	& $\K([x^{\T},\, y^{\T}]^{\T};\,\L)$ & \large{$relm$} &$\widetilde{\Ms}(z)$& $\Ms([x^{\T},\, y^{\T}]^{\T};\,\L)$ & \large{$relc$} &$\widetilde{\C}(z)$ &  $\C([x^{\T},\, y^{\T}]^{\T};\,\L)$ \\
  \midrule 
$ 3$&	4.6396e-08&	1.0866e+02&	1.0325e+02&	9.2530e-08&	1.0315e+02&	8.3160e+01	&9.2530e-08&	1.0315e+02	&8.3160e+01\\[1ex]
   $ 4$&	1.0295e-07&	1.2567e+03	&1.1946e+03	&4.0283e-07&	1.1158e+03&	8.9754e+02	&4.0283e-07&	1.1158e+03&	8.9754e+02	\\	[1ex]
    $5$&	1.3490e-07&	1.1905e+03&	1.1256e+03&	5.3926e-07&	1.2062e+03&	9.4423e+02	&5.3926e-07&	1.2062e+03&	9.4423e+02\\[1ex]
    $6$&	1.1442e-07&	1.4744e+03&	1.3738e+03	&3.8692e-07	&1.1110e+03&	8.5833e+02&	3.8692e-07	&1.1110e+03&8.5833e+02\\[1ex]
    $7$&	1.4617e-07	&2.5853e+03	&2.5366e+03	&5.2901e-07	&1.1384e+03	&9.1026e+02&	5.2901e-07&	1.1384e+03	&8.1026e+02\\[1ex]
$8$	& 5.1493e-08&	2.6605e+03&	2.1679e+03&	2.0993e-07&	1.0634e+03	&8.8527e+02	&2.0993e-07&	1.0634e+03&	8.8527e+02\\[1ex]
$9$& 7.7302e-08&	1.2791e+03&	1.0043e+03&	2.5382e-07&	1.0339e+03&	8.2775e+02&	2.5382e-07&	1.0339e+03	&8.2775e+02	\\[1ex]
$10$ &1.2621e-07&	1.5807e+04&	1.5205e+04	&4.5006e-07&	1.0406e+04&	8.3004e+03&	4.5006e-07&	1.0406e+04	&8.3004e+03\\[1ex]
     \bottomrule
    \end{tabular}
    }
    \end{table}
    
    The perturbations in the input data constructed as in \eqref{per1}-\eqref{per2} with $q=8,$ $\D A_1=\frac{1}{2}(\widehat{A}+{ \widehat{A}}^{\T}),$  where $ \widehat{A} \in \R^{m\times m}$  is random matrix.  The numerical result for the structured and unstructured \textit{NCN}, \textit{MCN}, and \textit{CCN} with $\mathbf{L}=I_{m+n} $ are presented in Table \ref{exam3:tab1} for $r=3,4,\ldots, 10.$ Since the block matrix $A$ is symmetric, we compute the structured \textit{NCN}, \textit{MCN} and \textit{CCN} using Theorem \ref{th31} and Remark \ref{RE:TH45} with $D={\bf 0}$.
Unstructured \textit{CNs}  are computed using \eqref{eq331}, \eqref{eq332},
 and Remark \ref{rm41}. We observed $\eta_1\approx \mathcal{O}(10^{-9})$ and $\eta_2\approx \mathcal{O}(10^{-8})$ in all cases. Results reported in Table \ref{exam3:tab1} demonstrate that for all values of $r,$ structured \textit{MCN} and \textit{CCN} are almost one order smaller than the unstructured \textit{MCN} and \textit{CCN}. Moreover, the estimated upper bounds of the relative error of the solution produced by the structured \textit{CNs} are sharper than those obtained by the unstructured \textit{CNs} irrespective of the increasing size of $\M$ (taken up to $300$).

    \end{example}
\vspace{2mm}
{ \begin{example}\label{exam4}   Consider  the following second-order ODE \cite{ ZZBai2014}:
     \begin{equation}\label{eq:ode2}
        \left\{\begin{array}{lc}
          u_1^{''}(t)-u_2^{'}(t) -\frac{1}{t}u_2(t)  ={\bf 0},\\
            u_1^{'}(t)-\frac{1}{t}u_1(t)+u_2^{''}(t) -\frac{2}{t^2}u_2(t)  =\sigma(t), & 0<t<1,\\
             u_1(0)=0,~ u_1(1)=0,~ u_1^{'}(0)=0 ~\text{and} ~u_2(0)=0,&
        \end{array}\right.
    \end{equation}
   where $\sigma(t)=3t^3-4t^2+13t-2/t,$ and  the exact solutions are  $u_1(t)=t^2(1-t)^2$ and $u_2(t)=3t^3-4t^2+t.$

     The sinc discretization \cite{ZZBai2014} of the second-order ODE system \eqref{eq:ode2} yields the \textit{GSPP} \eqref{eq11} (with $m=n$). The block matrices $A, B, C$, and $D$ are computed using the formulae provided in \cite[Page-114, 115]{ZZBai2014}. 
      To exploit the Toeplitz structures on $A$ and $D$,  we define them as follows:
    \begin{align*}
    &A=D=T_1+T_2\in \R^{n\times n}.
\end{align*}
Other block matrices are given by \begin{align*}
     &B^{\T}=\frac{1}{2}(K_1T_1+T_1K_1)+K_2\in \R^{n\times n} ~\text{and}~C=-\frac{1}{2}(K_1T_1+T_1K_1)+K_3\in \R^{n\times n},
\end{align*}
 where $T_1, T_2\in \R^{n\times n}$ are defined by
 \begin{eqnarray*}
     T_1=\bmatrix{0&-1&\frac{1}{2}&\ldots&\frac{(-1)^{n-1}}{n-1}\\
     1&0&\ddots&\ddots&\vdots\\ -\frac{1}{2}&1&\vdots&-1&\frac{1}{2}\\ \vdots&\ddots&\ddots&0&-1\\ -\frac{(-1)^{n-1}}{n-1}&\ldots&-\frac{1}{2}&1&0
     },~  T_2=\bmatrix{\frac{\pi^2}{3}&-2&\frac{2}{2^2}&\ldots&\frac{2(-1)^{n-1}}{(n-1)^2}\\
     -2&\frac{\pi^2}{3}&\ddots&\ddots&\vdots\\ \frac{2}{2^2}&-2&\vdots&-2&\frac{2}{2^2}\\ \vdots&\ddots&\ddots&\frac{\pi^2}{3}&-2\\ \frac{2(-1)^{n-1}}{(n-1)^2}&\ldots&\frac{2}{2^2}&-2&\frac{\pi^2}{3}
     },
 \end{eqnarray*}
 and
\begin{align*}
& K_1:=\mathbf{D}_{\chi_1},~ K^{i}_C:=\mathbf{D}_{\chi^i_C}, ~K^{i}_G:=\mathbf{D}_{\chi^i_{G}},
 K_i=\frac{1}{2}(K^{i}_C+K^{i}_G), \\
 & \chi_1=[g_1(t_{-N}),g_1(t_{-N+1}),\ldots, g_1(t_{N})]^{\T}, ~ \chi_C^i:=[g^{i}_C(t_{-N}),g^{i}_C(t_{-N+1}),\ldots, g^{i}_C(t_{N})]^{\T},\\
& \chi_G^i:=[g^{i}_G(t_{-N}),g^{i}_G(t_{-N+1}),\ldots, g^{i}_G(t_{N})]^{\T},~~i=2,3,\\
  &g_{1}=h\frac{\mu_1}{\phi'}, ~ g^{2}_C=g^{3}_C=-h^2\frac{\mu_0}{(\phi')^2},~g^{2}_G=-h^2\left(\frac{1}{\phi'}\left(\frac{\mu_1}{\phi'}\right)'+\frac{\mu_0}{(\phi')^2}\right),\\
  & \text{and}~g^{3}_G=h^2\left(\frac{1}{\phi'}\left(\frac{\mu_1}{\phi'}\right)'-\frac{\mu_0}{(\phi')^2}\right).
 \end{align*}
 The functions $\mu_1(t), \mu_0(t),$ and $\phi(t)$ are given by $\mu_1(t)=1,$ $\mu_0(t)=-\frac{1}{t},$  and $\phi(t)=\text{ln}(t/(1-t)).$     Moreover, $n=2N+1,$ $h=\pi/\sqrt{2N},$ and $t_k=\phi^{-1}(kh).$  
 \begin{table}[ht!]
			\centering
		\caption{   Comparison of unstructured and structured \textit{NCN}, \textit{MCN}, and \textit{CCN} with their corresponding relative errors  when $\mathbf{L}=I_{2n}$ for Example \ref{exam4}.}
			\label{exam4:tab1}
   \resizebox{15cm}{!}{
		  		\begin{tabular}{cccccccccc}
					\toprule
				 $n$&\large{$relk$} &$\widetilde{\K}(z)$	& $\K([x^{\T},\, y^{\T}]^{\T};\,\L)$ & \large{$relm$} &$\widetilde{\Ms}(z)$& $\Ms([x^{\T},\, y^{\T}]^{\T};\,\L)$ & \large{$relc$} &$\widetilde{\C}(z)$ &  $\C([x^{\T},\, y^{\T}]^{\T};\,\L)$ \\
  \midrule 
 $41$&2.1668e-07&	8.4401e+02&	8.3043e+02&	1.3862e-06&	1.4301e+02&	5.2023e+01	&9.6646e-05	&1.1794e+04	&5.2908e+03	\\[1ex]
   $81$ &1.6126e-07&	1.5963e+03&	1.2802e+03&	1.2767e-07&	2.0050e+02&	3.0540e+01&	2.5905e-05&	4.8140e+04	&1.0074e+03\\[1ex]
     $121$ &4.4422e-07&	3.2151e+03&	3.2054e+03&	3.3468e-07	&4.0321e+02&	9.9676e+01	&5.6058e-06&	1.7998e+04&	8.6974e+03\\	[1ex]
    $161$ &4.6746e-07&	6.4824e+03&	4.4279e+03&	4.2745e-07&	8.7713e+02&	9.7876e+01	&1.8302e-05&	1.1962e+05	&9.1779e+04\\[1ex]
    $201$&1.1583e-06&	1.0882e+04&	1.0725e+04&	9.4135e-07	&1.0197e+03	&7.0819e+02&	1.2433e-05&	1.2847e+05&	8.9705e+04\\[1ex]
     \bottomrule
    \end{tabular}
    }
    \end{table}
   Further, we select $f={\bf 0}\in \R^n$ and $g=randn(n,1)\in \R^{n}$ using the MATLAB command \textit{randn}. 
   
   We take $q=7,$ and generate the perturbation matrices as in $(6.1)$-$(6.2)$ with $\D A$ and $\D D$ being Toeplitz. The numerical results for the structured and unstructured \textit{CNs} and the relative errors for  $N=20,40,60,80,100$ are reported in Table \ref{exam4:tab1}. We find that for all cases, $\eta_1$ and $\eta_2$  approximately are of order $10^{-8}$ and $10^{-7},$  respectively. We compute the structured \textit{NCN}, \textit{MCN}, and \textit{CCN} using Theorem \ref{th31}, where the block matrices $A$ and $D$ have Toeplitz structures. Unstructured \textit{CNs}  are computed using \eqref{eq331}, \eqref{eq332},
 and Remark \ref{rm41}. The solution $z=[x^{\T},y^{\T}]^{\T}\in \R^{2n}$ is computed using MATLAB function $\mathcal{M} \backslash \mathbf{d},$ where $\mathbf{d}=[f^{\T},g^{\T}]^{\T}\in \R^{2n}.$  Results in Table \ref{exam4:tab1} illustrate that, for all values of $N,$ structured \textit{CNs}  provide sharper upper bounds for relative error in the solution. Furthermore, the structured \textit{MCN} and \textit{CCN} are nearly an order of magnitude smaller than the unstructured ones, even as the size of the matrix $\M$ increases (taken up to $402$).

\end{example}}
    
}
\section{Conclusions}\label{sec8}
In this paper, by considering structure-preserving perturbations on the block matrices, we have investigated structured \textit{NCN}, \textit{MCN}, and \textit{CCN} for the linear function $\mathbf{L}[x^{\T},\, y^{\T}]^{\T}$ of the solution of \textit{GSPPs}. We present compact formulae of structured \textit{CNs} for  $\mathbf{L}[x^{\T}, \, y^{\T}]^{\T}$   in two cases. First, when  $B=C$ is Toeplitz and  $A$ is symmetric.  Second, when $B\neq C$ and the matrices $A$ and $D$ possess linear structures. Furthermore, we have obtained unstructured \textit{CNs}' formulae for $B=C$, which generalizes the previous results on \textit{CNs} of \textit{GSPP} when $\mathbf{L}$ is $I_{m+n}, \bmatrix{I_m & \bf 0}$ and $\bmatrix{{\bf 0} & I_n}.$  Additionally, the relations between structured and unstructured \textit{CNs} are obtained. It is found that the structured \textit{CNs} are always smaller than their unstructured counterparts.   An application of obtained structured \textit{CNs} formulae is provided to find the structured \textit{CNs} for \textit{WTRLS} problems, and they are also used to retrieve some prior found results for Tikhonov regularization problems. Numerical experiments are performed to validate the theoretical findings pertaining to proposed structured \textit{CNs}.  Moreover, empirical investigations indicate that the proposed structured 
\textit{MCN} and \textit{CCN} give much more accurate error estimations to the solution of \textit{GSPPs} compared to unstructured \textit{CNs}.
\section*{Acknowledgments}
Pinki Khatun acknowledges the Council of Scientific $\&$ Industrial Research (CSIR) in New Delhi, India, for their financial support in the form of a fellowship (File no. 09/1022(0098)/2020-EMR-I).
\bibliography{ref}
\bibliographystyle{abbrvnat}
\end{document}